\renewcommand{\le}{\leqslant}
\renewcommand{\ge}{\geqslant}
\definecolor{mno}{rgb}{0.5,0.1,0.5}
\newcommand{\R}{\mathds R}
\newcommand{\Pp}{\mathds P}
\newcommand{\Ee}{\mathds E}
\newcommand{\I}{\mathds 1}
\newcommand{\Ss}{ \mathds{S}}
\newcommand{\D}{\mathscr{D}}
\newcommand{\E}{\mathscr{E}}
\newtheorem{theorem}{Theorem}[section]
\newtheorem{lemma}[theorem]{Lemma}
\newtheorem{proposition}[theorem]{Proposition}
\newtheorem{corollary}[theorem]{Corollary}
\theoremstyle{definition}
\newtheorem{example}[theorem]{Example}
\newtheorem{remark}[theorem]{Remark}
\begin{document}
\allowdisplaybreaks
\title[Intrinsic Ultracontractivity for General L\'evy processes]
{\bfseries Intrinsic Ultracontractivity for General L\'evy processes on
Bounded Open Sets}
\author{Xin Chen\qquad Jian Wang}
\thanks{\emph{X.\ Chen:}
   Department of Mathematics, Shanghai Jiao Tong University, 200240 Shanghai, P.R. China. \texttt{chenxin\_217@hotmail.com}}
  \thanks{\emph{J.\ Wang:}
   School of Mathematics and Computer Science, Fujian Normal University, 350007 Fuzhou, P.R. China. \texttt{jianwang@fjnu.edu.cn}}

\date{}

\maketitle

\begin{abstract} We prove that a general (not necessarily symmetric) L\'{e}vy process
 killed on exiting a bounded open set (without regular condition on the boundary) is intrinsically ultracontractive, provided that $B(0,R_0)\subseteq \rm{supp}(\nu)$
for some constant $R_0>0$, where $\rm{supp}(\nu)$ denotes the support of the associated L\'{e}vy measure $\nu$. For a symmetric L\'{e}vy process killed on exiting a bounded H\"{o}lder domain of order $0$, we also obtain the intrinsic ultracontractivity under much weaker assumption on the associated L\'{e}vy measure.
\medskip

\noindent\textbf{Keywords:} L\'{e}vy process; Dirichlet semigroup;
intrinsic ultracontractivity; super Poincar\'{e} inequality
\medskip

\noindent \textbf{MSC 2010:} 60G51; 60G52; 60J25; 60J75.
\end{abstract}
\allowdisplaybreaks

\section{Introduction and Main Results}\label{section1}
\subsection{Dirichlet Semigroup and its Dual Semigroup for General L\'{e}vy Process}\label{section11}
Let $X=((X_t)_{t\ge0}, \Pp^x)$ be a L\'evy process on $\R^d$ with L\'evy triplet
$(Q,b,\nu)$, such that its characteristic exponent is given by
\begin{equation}\label{sym} q(\xi)=
\frac{1}{2}\langle \xi, Q\xi\rangle+i \langle \xi, b \rangle+\int_{\R^d \setminus \{0\}}
\big(1-e^{i\langle\xi, z\rangle}+i\langle\xi, z\rangle\I_{\{|z|\le 1\}}\big)\,\nu(dz),\quad \xi\in\R^d,
\end{equation} where $Q:\R^d \rightarrow \R^d$ is a symmetric non-negative definite $d\times d$ matrix, $b\in\R^d$, and $\nu$ is a L\'{e}vy measure on $\R^d$.
Let
$\hat X=(\hat X_t)_{t \ge 0}$ denote the dual process of $X$, which is a L\'evy process with the L\'evy triplet
$(Q,-b,\hat\nu)$ such that $\hat \nu(U)=\nu(-U)$ for any $U \in \mathscr{B}(\R^d).$
Throughout this paper, we assume that the process $X$ has a continuous, bounded and strictly positive transition density $p(t,x,y)=p(t,0,y-x)$, i.e.
for every $t>0$ and $f \in {B}_b(\R^d)$, (here and in what follows, ${B}_b(\R^d)$ denotes the set of bounded measurable functions on $\R^d$,)
\begin{equation*}
\Ee^x f(X_t)=\int_{\R^d} p(t,x,y)f(y)\,dy,\quad x \in \R^d,
\end{equation*} $p(t,\cdot,\cdot): \R^d \times \R^d \mapsto (0,\infty)$ is continuous, and there is a constant $c(t)>0$ such that
$$0<p(t,x,y)\le c(t),\ \ \forall\ x,y \in \R^d.$$ See \cite{BSW, KSz, KSc1, KSc2, SSW, S} for sufficient conditions in terms of characteristic exponent $q(\xi)$.

Let $$T_tf(x)=\Ee^xf(X_t),\quad \hat{T}_tf(x)=\Ee^xf(\hat X_t).$$ Then for any non-negative Borel measurable function $f$ and $g$,
$$\int T_tf(x)g(x)\,dx = \int f(x)\hat T_tg(x)\,dx.$$ Hence, the (dual) L\'{e}vy process $\hat X$ also possesses a continuous, bounded and strictly positive transition density $\hat p(t,x,y)$ such that for any $t>0$ and $x,y\in\R^d$, $\hat p(t,x,y)=p(t,y,x)$ and
\begin{equation*}\label{e1-0a}
\Ee^x f(\hat X_t)=\int_{\R^d}\hat p(t,x,y)f(y)\,dy=\int_{\R^d} p(t,y,x)f(y)\,dy, \quad x \in \R^d, f\in \mathscr{B}_b(\R^d).
\end{equation*}

\ \

Let $D\subseteq \R^d$ be an open set. Define the following subprocess of $X$
\begin{equation}\label{e1-1a}
X^D_t:=\begin{cases}
X_t,\quad\text{if}\ t<\tau_D\\
\,\partial,\,\,\quad \text{if}\ t\ge \tau_D,
\end{cases}
\end{equation}
where $\tau_D:=\inf \{t > 0: X_t \notin D\}$ and $\partial$ denotes the cemetery point. Then, the process $X^D:=(X^D_t)_{t\ge0}$
is called the killed process of $X$ on exiting $D$. By the strong Markov property and the continuity of $p(t,\cdot,\cdot)$ for all $t>0$, it is easy to see that the process $X^D$ has a transition density (or Dirichlet heat kernel) $p^D(t,x,y)$, which enjoys the following relation with $p(t,x,y)$:
\begin{equation}\label{e1-0}
\begin{split} p^D(t,x,y)&=p(t,x,y)-\Ee^x\big[p(t-\tau_D,X_{\tau_D},y)\I_{\{t\ge \tau_D\}}\big],\quad x, y \in D;\\
p^D(t,x,y)&=0,\quad x \notin D \text{ or } y \notin D.
\end{split}
\end{equation}
According to \eqref{e1-0}, one can show that $p^D(t,x,y)$, $t>0$, satisfy the Chapman-Kolmogorov equation; moreover, for
every $t>0$ the function $p^D(t,\cdot,\cdot): D \times D \mapsto [0,\infty)$ is continuous, and $\sup_{x,y \in D}p^D(t,x,y)\le \sup_{x,y\in\R^d}p(t,x,y)<\infty$, see e.g.\ the proof of \cite[Theorem 2.4]{CZ}. Define
\begin{equation*}
T_t^D f(x)=\Ee^x f(X_t^D)=\int_D p^D(t,x,y)f(y)\, dy,\quad t>0, x\in D, f\in L^2(D;dx).
\end{equation*}
It is a standard result that $(T_t^D)_{t \ge 0}$ is a strongly continuous contraction semigroup on
$L^2(D;dx)$, which is called the Dirichlet semigroup associated with the process
$X^D$. We further assume that $p^D(t,x,y)>0$ for every $t>0$ and $x,y \in D$, which is equivalent to saying that $(T_t^D)_{t \ge 0}$ is irreducible, i.e. $T_t^D (\I_U)(x)> 0$ for every $t>0$, $x\in D$ and
open set $U\subseteq D$ with $|U|>0$, where $|U|$ denotes the Lebesgue measure of $U$. We should mention that even if the transition density $p(t,x,y)$ is smooth and strictly positive, it is non-trivial to show the strict positivity of $p^D(t,x,y)$, see Proposition \ref{p2-1} below for some mild assumption on the L\'{e}vy measure.

Let $\hat \tau_D:=\inf \{t > 0: \hat X_t \notin D\}$ be the first exit time
from $D$ for the dual process $\hat X$. Similar to \eqref{e1-1a}, we can define the killed process $\hat X^D:=(\hat X^D_t)_{t\ge0}$ of $\hat X$  on exiting $D$. For any $t>0$ and $x\in D$, define $$\hat{T}_t^D f(x)=\Ee^x f(\hat{X}_t^D).$$ Due to Hunt's switching identity (see \cite[Chapter II, Theorem 5]{Bo}),
$$\int_D f(x) T_t^D g(x)\,dx=\int_D g(x) \hat T_t^D f(x)\,dx.$$
Then, the killed process $\hat X^D$ also has a transition density $\hat p^D(t,x,y)$ such that
$\hat p^D(t,x,y)=p^D(t,y,x)$ for all $t>0$ and $x,y\in D$, and so
\begin{equation*}
\begin{split}
\hat T_t^D f(x)=\int_D  p^D(t,y,x)f(y)\, dy,\quad t>0, x\in D, f\in L^2(D;dx).
\end{split}
\end{equation*}

\ \

When the L\'{e}vy process $X$ is symmetric, the associated L\'{e}vy measure $\nu$ is symmetric, and the characteristic exponent $q(\xi)$ given by \eqref{sym} is reduced into
$$q(\xi)= \frac{1}{2}\langle \xi, Q\xi\rangle+\int\big( 1-\cos \langle \xi,z\rangle\big)\,\nu(dz).$$ Then, $(T_t)_{t\ge 0}$ and $(T_t^D)_{t\ge0}$ are symmetric semigroups on $L^2(\R^d;dx)$ and $L^2(D;dx)$, respectively. In particular, $T_t=\hat{T}_t$ and $T_t^D=\hat{T}_t^D$ for any $t>0$, and $p^D(t,x,y)=\hat{p}^D(t,x,y)$ for any $t>0$ and $x,y\in D.$

\subsection{Main Result}
In this part, we always assume that \emph{$D$ is a bounded open subset of $\R^d$}.  Since $\sup_{x,y \in D}p^D(t,x,y)<\infty$ and
$D$ is bounded, both $T_t^D$ and $\hat T_t^D$ are Hilbert-Schmidt operators on $L^2(D;dx)$ for every $t>0$, and so they are
compact. Noticing that $p^D(t,x,y)>0$ for all $x,y\in D$, it follows form Jentzsch's Theorem (see \cite[Chapter V, Theorem 6.6]{Sc}) that the common value $-\lambda_1=\sup \textrm{Re}(\sigma(L_D))=\sup \textrm{Re}(\sigma (\hat L_D))<0$
\footnote{{For any $f,g\in C_c^\infty(D;dx)$, $$\int_D f(x)L_Dg(x) \,dx=\int_D g(x) \hat{L}_D f(x)\,dx.$$ Since the associated L\'{e}vy measure $\hat \nu $ of ${\hat L}_D$ satisfies that $\hat \nu(U)=\nu(-U)$ for any $U \in \mathscr{B}(\R^d),$ we have for $f \in C_c^{\infty}(D;dx)$
\begin{align*}-\int_D f(x) L_D f(x) \,dx=&-\frac{1}{2}\int_D f(x) (L_D+\hat L_D)f(x) \,dx\\
 =& \frac{1}{4}\int_{\R^d} \int_{\R^d} \Big(f(x+z)-f(x)\Big)^2\,\big(\nu(dz)+\hat \nu(dz)\big)\,dx.\end{align*}
Now, let $\phi_1$ be the normalized non-zero eigenfunction associated with
 $\lambda_1$. Since $L_D\phi_1=-\lambda_1 \phi_1$ and $\phi_1\equiv0$ on $D^c$, by the standard approximation, \begin{align*}\lambda_1=&-\int_D \phi_1(x) L_D\phi_1(x)\,dx
 = \frac{1}{4}\int_{\R^d} \int_{\R^d} \Big(\phi_1(x+z)-\phi_1(x)\Big)^2\,\big(\nu(dz)+\hat \nu(dz)\big)\,dx>0;\end{align*}
  otherwise $\phi_1$ is a constant function on $\R^d$, which is impossible.}} is an eigenvalue of multiplicity 1 for the operators $L_D$ and $\hat L_D$, which are $L^2(D;dx)$-generators of $(T^D_t)_{t\ge0}$ and $(\hat T^D_t)_{t\ge0}$ respectively. Moreover, according to \cite[Proposition 3.8]{KS1}, the corresponding eigenfunctions $\phi_1$ and $\hat\phi_1$ can be chosen to be bounded, continuous and strictly positive on $D$. In the literature, this eigenfunction
$\phi_1$ (resp. $\hat \phi_1$) is named ground state (resp.\ dual ground state). We are interested in the intrinsic
ultracontractivity of $(T_t^D)_{t\ge0}$, which is defined that for every $t>0$, there exists a constant $C(t)>0$ such that
\begin{equation}\label{e1-3}
p^D(t,x,y)\le C(t)\phi_1(x)\hat \phi_1(y),\quad \ x,y\in D.
\end{equation}

The notion of intrinsic ultracontractivity for symmetric semigroups was first introduced by Davies and Simon in
\cite{DS} (note that in symmetric setting, $\phi_1=\hat{\phi}_1$ in \eqref{e1-3}), and then it was generalized to non-symmetric semigroups by Kim and Song in \cite{KS2}.
It has wide applications in the area of analysis and probability. Recently,
the intrinsic ultracontractivity of Markov semigroups (including Dirichlet semigroups and Feyman-Kac semigroups) has been intensively established for various L\'evy processes or L\'evy type processes, see e.g. \cite{CW1, CW2,CS,CS1,G,KK,KL,KS2, KS3, KS1,K,KS,Kw}.
The aim of this paper is to study the intrinsic ultracontractivity of Dirichlet semigroup $(T_t^D)_{t \ge 0}$ for a discontinuous
(not necessarily symmetric) L\'{e}vy process (which may contain Brownian motion) on a bounded open set $D$ with very mild conditions on its L\'evy measures $\nu$ and the set $D$.

\ \

To state our first contribution, we need the following additional assumption on the L\'{e}vy measure $\nu$.
\begin{itemize}
\item[{\bf (A1)}] \emph{There exists a constant $R_0>0$ such that
\begin{equation}\label{e1-1}
B(0,R_0)\subseteq \rm {supp}(\nu)
\end{equation}
where $B(x,r)$ denotes the ball
with center $x \in \R^d$ and radius $r>0$, and $\rm {supp}(\nu)$ denotes the support of the L\'{e}vy measure $\nu$.}
\end{itemize}

Note that for  L\'{e}vy process with finite range jumps, the distance between connected components of $D$ should not be too far away, otherwise $p^D(t,x,y)$ will be zero there. Therefore, to ensure the strictly positivity of $p^D(t,x,y)$, we need the following \emph{roughly connected} assumption on the open set $D$, e.g.\ see \cite[Definition 4.3]{KS1}.
\begin{itemize}
\item[{\bf (RC)}] \emph{For any $x,y\in D$, there exist distinct connected components $\{D_i\}_{i=1}^m$ of $D$, such that
$x\in D_1$, $y\in D_m$ and for every
$1 \le i \le m-1$, $dist(D_i, D_{i+1})< R_0$, where $R_0$ is the constant in
Assumption {\bf (A1)}.}
\end{itemize}

\begin{theorem}\label{t1-1}
Let $X$ be the L\'{e}vy process as above such that assumption {\bf({A1})} holds, and suppose that the open set $D$ satisfies {\bf({RC})}.
Then the associated Dirichlet semigroup $(T_t^D)_{t \ge 0}$ is
intrinsically ultracontractive. More explicitly, there is a constant $c>0$ such that for all $t>0$ and $x,y\in D,$
\begin{equation}\label{t1-1-1-1} p^D(t,x,y)\le  \frac{ce^{-\lambda_1t} }{(t\wedge1)^2}\bigg(\int e^{-(t\wedge1)\mathrm{Re}\, q(\xi)}\,d\xi\bigg) \phi_1(x)\hat\phi_1(y),\end{equation} where $q(\xi)$ is the characteristic exponent of the process $X$ given by \eqref{sym}, and $-\lambda_1<0$ is the common eigenvalue corresponding to ground state $\phi_1$ and $\hat \phi_1$.
\end{theorem}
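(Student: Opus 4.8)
The plan is to reduce \eqref{t1-1-1-1} to the single bound $\Ee^x[\tau_D]\le C\,\phi_1(x)$ for all $x\in D$ (together with its dual $\Ee^x[\hat\tau_D]\le C\,\hat\phi_1(x)$), and then to establish this bound from {\bf (A1)} and {\bf (RC)}. For the reduction, fix $t>0$, put $\epsilon:=\tfrac15(t\wedge1)$ (so $2\epsilon<t$), and use Chapman--Kolmogorov together with \eqref{e1-0}:
\begin{align*}
p^D(t,x,y)&=\int_D\!\!\int_D p^D(\epsilon,x,u)\,p^D(t-2\epsilon,u,v)\,p^D(\epsilon,v,y)\,du\,dv\\
&\le\Big(\sup_{u,v\in D}p^D(t-2\epsilon,u,v)\Big)\,\Pp^x(\tau_D>\epsilon)\,\Pp^y(\hat\tau_D>\epsilon).
\end{align*}
By Markov's inequality $\Pp^x(\tau_D>\epsilon)\le\epsilon^{-1}\Ee^x[\tau_D]$ and $\Pp^y(\hat\tau_D>\epsilon)\le\epsilon^{-1}\Ee^y[\hat\tau_D]$, so the announced bound makes each outer factor $\le C(t\wedge1)^{-1}\phi_1(x)$, resp.\ $\le C(t\wedge1)^{-1}\hat\phi_1(y)$, which accounts for the $(t\wedge1)^{-2}$. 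The middle factor is estimated by interpolating through $L^2(D;dx)$, where $\|T_r^D\|_{L^2\to L^2}=e^{-\lambda_1 r}$ while $\|T_r^D\|_{L^1\to L^1},\|T_r^D\|_{L^\infty\to L^\infty}\le1$; the residual small-time operator norms are controlled by the Fourier bound $\sup_{x,y}p(\eta,x,y)\le(2\pi)^{-d}\int e^{-\eta\Re q(\xi)}\,d\xi$, and choosing the intermediate times comparable to $t\wedge1$ produces, after bookkeeping of constants, the factor $c\,e^{-\lambda_1 t}\int e^{-(t\wedge1)\Re q(\xi)}\,d\xi$ in \eqref{t1-1-1-1}. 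Since $p^D(t,x,y)=\hat p^D(t,y,x)$, it suffices to prove $\Ee^x[\tau_D]\le C\phi_1(x)$; the dual statement is identical after exchanging $X$ and $\hat X$.

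To prove $\Ee^x[\tau_D]\le C\phi_1(x)$ I would pass to the Green operator $G^D$ of $X^D$, so that $\Ee^x[\tau_D]=G^D\I(x)$, and note that $L_D\phi_1=-\lambda_1\phi_1$ gives $\phi_1=\lambda_1 G^D\phi_1$. Fix a sufficiently small ball $B$ with $\overline B\subset D$, set $\kappa:=\inf_{\overline B}\phi_1>0$, so that
\[
\phi_1(x)\ \ge\ \lambda_1\kappa\,G^D\I_B(x)\ =\ \lambda_1\kappa\,\Ee^x\!\Big[\int_0^{\tau_D}\I_{\{X_s\in B\}}\,ds\Big],
\]
and let $\sigma_B:=\inf\{t>0:X_t\in\overline B\}$, so that $\sigma_B\wedge\tau_D=\tau_{D\setminus\overline B}$. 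Splitting $\Ee^x[\tau_D]$ over $\{\sigma_B<\tau_D\}$ and $\{\sigma_B\ge\tau_D\}$ and using the strong Markov property at $\sigma_B$, the inclusion $X_{\sigma_B}\in\overline B$, the finiteness of $M':=\sup_{w\in D}\Ee^w[\tau_D]$ (a consequence of the uniform decay $\sup_w\Pp^w(\tau_D>r)\to0$, which itself follows from $\|T_r^D\|_{L^2\to L^2}=e^{-\lambda_1 r}$ and the heat-kernel bound above), and the positivity $\eta:=\inf_{w\in\overline B}\Ee^w[\int_0^{\tau_D}\I_{\{X_s\in B\}}\,ds]>0$, one obtains
\[
\Ee^x[\tau_D]\ \le\ 2\,\Ee^x[\sigma_B\wedge\tau_D]\ +\ \frac{M'}{\eta}\,\Ee^x\!\Big[\int_0^{\tau_D}\I_{\{X_s\in B\}}\,ds\Big]\ \le\ 2\,\Ee^x[\sigma_B\wedge\tau_D]\ +\ \frac{M'}{\eta\lambda_1\kappa}\,\phi_1(x),
\]
where the occupation term was also used to bound $\Pp^x(\sigma_B<\tau_D)\le\eta^{-1}\Ee^x[\int_0^{\tau_D}\I_{\{X_s\in B\}}\,ds]$. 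Hence the whole matter reduces to the single estimate
\[
\Ee^x[\sigma_B\wedge\tau_D]\ \le\ C\,\phi_1(x),\qquad x\in D.\tag{$\ast$}
\]

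Finally, I would prove $(\ast)$ by a maximum-principle comparison on $D\setminus\overline B$. The function $h(x):=\Ee^x[\sigma_B\wedge\tau_D]$ vanishes on $\overline B\cup D^c$ and satisfies $-L_{D\setminus\overline B}h=1$ there, whereas for $x\in D\setminus\overline B$
\[
-L_{D\setminus\overline B}\phi_1(x)\ =\ \lambda_1\phi_1(x)+\int_{\{z:\,x+z\in\overline B\}}\phi_1(x+z)\,\nu(dz)\ \ge\ \lambda_1\phi_1(x)+\kappa\,\nu(\overline B-x),
\]
so that $C\phi_1$ dominates $h$ as soon as $C\big(\lambda_1\phi_1(x)+\kappa\,\nu(\overline B-x)\big)\ge1$ for every $x\in D\setminus\overline B$. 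This is exactly where {\bf (A1)} and {\bf (RC)} enter. By {\bf (A1)}, for $x$ within a fixed distance of $B$ the set $\overline B-x$ contains a fixed-size open subset of $B(0,R_0)\subseteq\operatorname{supp}(\nu)$, whence $\nu(\overline B-x)\ge c_\nu>0$ uniformly; and on each compact $K\subset D$ one instead has $\phi_1\ge c_K>0$ while $h\le M'$. The remaining difficulty concerns points of $D$ close to $\partial D$ and far from $B$: there one uses {\bf (RC)}, chaining finitely many jumps of length $<R_0$ through the roughly connected components of $D$ — each admissible because $B(0,R_0)\subseteq\operatorname{supp}(\nu)$ — and the strict positivity of $p^D$ from Proposition \ref{p2-1}, to build on the relevant part of $D\setminus\overline B$ a supersolution of $-L_{D\setminus\overline B}(\cdot)=1$ comparable to $\phi_1$, and then concludes $(\ast)$. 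Carrying out this construction uniformly — equivalently, controlling the expected time to reach $B$ or to exit $D$, with the correct rate of vanishing as $x\to\partial D$, for an arbitrary bounded open set and an essentially arbitrary L\'evy measure subject only to {\bf (A1)} — is the technical heart of the argument and the step I expect to be the main obstacle; it is a quantitative strengthening of the strict-positivity statement of Proposition \ref{p2-1}.
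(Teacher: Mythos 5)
Your reduction of \eqref{t1-1-1-1} to $\Ee^x[\tau_D]\le C\phi_1(x)$ (and its dual), your use of $\phi_1=\lambda_1 G^D\phi_1$ together with $\inf_{\overline B}\phi_1>0$, and your further reduction to $(\ast)$ are all sound and closely parallel the paper (see Lemma \ref{l3-3} and the proof of Theorem \ref{t1-1}). The gap is in the proof of $(\ast)$, and your diagnosis in the last paragraph misidentifies the fix: you do not need a chaining argument, and $(\ast)$ is not a quantitative strengthening of Proposition \ref{p2-1}. The fix is in the choice of $B$ --- do not take a single small ball. Since $D$ is bounded, cover it by finitely many sets $\tilde D_1,\dots,\tilde D_n$ of diameter at most $R_0/2$, choose centers $x_i\in\tilde D_i$ and a radius $r_0<R_0/8$ with $\overline{B(x_i,2r_0)}\subset\tilde D_i$, and take $B:=\bigcup_{i=1}^n\overline{B(x_i,r_0)}$ and $A:=\bigcup_i B(x_i,r_0/2)$. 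Then for \emph{every} $y\in D\setminus B$ there is an $i$ with $|y-x_i|\le R_0/2$, so $B(x_i-y,r_0/4)\subset B(0,R_0)$, and {\bf(A1)} via Lemma \ref{l2-1} yields the \emph{uniform} lower bound $\nu(A-y)\ge\delta(r_0/4)>0$ on all of $D\setminus B$. This eliminates your problematic case of ``points of $D$ close to $\partial D$ and far from $B$'' outright: with this $B$, there are none. Your own supersolution inequality $-L_{D\setminus\overline B}\phi_1(x)\ge\kappa\,\nu(\overline B-x)\ge\kappa\,\delta(r_0/4)$ then holds at every $x\in D\setminus\overline B$, and your comparison gives $(\ast)$ with $C=(\kappa\,\delta(r_0/4))^{-1}$. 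No chaining. Assumption {\bf(RC)} enters only through Proposition \ref{p2-1}, to guarantee strict positivity of $p^D$ and hence of $\phi_1,\hat\phi_1$; it plays no role in the quantitative step.

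One further methodological remark. The paper stays entirely probabilistic, avoiding your maximum-principle comparison, whose rigor for an arbitrary bounded open set and general L\'evy generator would need justification ($\phi_1$ restricted to $D\setminus\overline B$ is a priori only an $L^2$-eigenfunction, and the pointwise identity $-L_{D\setminus\overline B}h=1$ at the irregular boundary of $D\setminus\overline B$ is not automatic). The paper instead uses the Ikeda--Watanabe formula (Lemma \ref{l3-1}) to get $\Pp^x(X_{\eta_{D\setminus B}}\in B)\ge\delta(r_0/4)\,\Ee^x[\eta_{D\setminus B}]$, iterates a ``return to $B$, exit $C$'' cycle of stopping times (Lemma \ref{l3-2}) to obtain $\int_C G^D(x,y)\,dy\ge c_2\int_{D\setminus C}G^D(x,y)\,dy$, and then applies $\phi_1=\lambda_1 G^D\phi_1$ exactly as you do. That is your first-return decomposition, summed over all returns, and it trades your PDE step for one that needs no boundary regularity; otherwise the two routes are close.
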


For symmetric L\'{e}vy process,
\cite{G} has established
the intrinsic ultracontractivity of Dirichlet semigroup on any bounded open set $D$,
when the associated L\'{e}vy measure has full support (i.e.\ \eqref{e1-1} holds with $R_0=\infty$). For general L\'{e}vy process, if the Lebesgue measure
is absolutely continuous with respect to L\'evy measure, the intrinsic ultracontractivity of Dirichlet semigroup on any bounded open set $D$
was verified in \cite{KS1}.
Note that in the latter setting the associated L\'evy measure also has full support, and the corresponding L\'evy process has full range jumps.
The reader can refer to \cite{G} for other non-degenerate conditions on L\'evy measure in the symmetric setting.
 On the other hand, when L\'evy measure is compactly supported and the Radon-Nikodym derivative of absolutely continuous part of L\'evy measure is bounded below by some positive constant near the origin,
Kim and Song proved in \cite{KS1} that the corresponding Dirichlet semigroup is intrinsically ultracontractive for general (not necessarily symmetric) L\'{e}vy process provided that $D$ is $\kappa$-fat, see \cite[Assumption A4(b)]{KS1}.  The reader can also refer to \cite{C} for the intrinsic ultracontractivity
for the Dirichlet semigroup associated with a Brownian motion on different non-smooth domains.

The new point of Theorem \ref{t1-1} is due to that, it gets rid of any regularity condition on bounded open set $D$ to ensure the intrinsic ultracontractivity of associated Dirichlet semigroups for general L\'{e}vy process with finite range jumps. Besides, we do not require that L\'evy measure has an absolutely continuous part. See Example \ref{ex1-1-2} in the end of Section \ref{section3} for an application of Theorem \ref{t1-1}.

\subsection{Symmetric L\'{e}vy Process on Bounded H\"older Domain of Order $0$}\label{section13}
Throughout the paper, we always refer to a connected open set as a domain.
It is known that the intrinsic ultracontractivity of Dirichlet semigroups for Brownian motion on a bounded domain $D$ depends on the
geometry of the boundary of $D$ (see \cite{C}). Theorem \ref{t1-1} indicates that for L\'{e}vy process even with finite range jumps, the associated Dirichlet semigroup can be intrinsically  ultracontractive without any regularity condition on the bounded domain. In fact, for more general bounded domains including bounded H\"older domain of order $0$, we can prove the intrinsic ultracontractivity of the associated Dirichlet semigroups for symmetric  L\'{e}vy process, whose L\'{e}vy measure satisfies weaker assumption than {\bf (A1)}.

To be more explicit, we introduce the following \emph{logarithmic distance integrability} assumption on the domain $D$.
\begin{itemize}
\item[{\bf (LDI)}] \emph{{ For each $\theta>0$,
\begin{equation}\label{e1-4}
\int_D \Big|\log\Big(\frac{1}{\rho_{\partial D}(x)}\Big)\Big|^{\theta}\,dx <\infty,
\end{equation}
where $\rho_{\partial D}(x)=\inf\{|x-y|: y \in \partial D\}$
denotes the distance between $x$ and the boundary of $D$.}}
\end{itemize}

According to the proof of \cite[Theorem 2]{SS}, any H\"older domain of order $0$ satisfies
{\bf (LDI)}. Note that, it is shown in \cite{SS} that a H\"{o}lder domain of order $0$ is bounded. John domains, in particular bounded Lipschitz
domains, are H\"{o}lder domains of order $0$.
In fact, recall that a domain $D$ is called \emph{H\"older domain of order $0$} if there exist
some constants $c_1,c_2>0$ and $x_0 \in D$, such that
\begin{equation*}
k_D(x_0,x)\le c_1\log\Big(\frac{1}{\rho_{\partial D}(x)}\Big)+c_2,\quad \forall\ x \in D.
\end{equation*}
Here,
$k_D(x,y)$ is the hyperbolic distance between
$x, y \in D$ defined by
\begin{equation*}
k_D(x,y):=\inf_{\gamma}\int_{0}^1\frac{|\dot{\gamma}(s)|}{\rho_{\partial D}(\gamma(s))}\,ds,
\end{equation*}
where the infimum is taken over all the rectifiable curves $\gamma: [0,1]$
$\rightarrow D$ such that $\gamma(0)=x$ and $\gamma(1)=y$. On the one hand, $$k_D(x_0,x)\le c_3 m$$ on
$$D_m:=\bigcup \Big\{Q\in {\mathscr{W}}:\ \frac{b^{-1}}{2^m}\le {\rm {diam}} (Q)\le \frac{b}{2^m}\Big\}$$
for every $m\ge 1$ and some constants $c_3, b>1$, where
${\mathscr{W}}=\{Q\}$ is a Whitney decomposition of $D$ into closed dyadic cubes with disjoint interiors, and
${\rm {diam}} (Q)$ denotes the diameter for a cube $Q\in {\mathscr{W}}$.
 Then, following the argument in \cite[Page 76]{SS} of \cite[Theorem 2]{SS}, one can see that for each $\theta>0$,
$$\int_D k^{\theta}_D(x_0,x)\,dx<\infty.$$
On the other hand, according to \cite[Line 17 in Page 76]{SS},
there is a constant $c_4>0$ such that for every $m \ge 1$
$$\log\Big(\frac{1}{\rho_{\partial D}(x)}\Big)\le c_4m,\quad x \in D_m.$$
From these, we
can repeat the proof of \cite[Theorem 2]{SS} and obtain that \eqref{e1-4} holds any H\"older domain of order $0$.

In the remainder of this subsection, we further assume that the L\'{e}vy process $X$ is symmetric, and adopt the following assumption on the L\'{e}vy measure $\nu$:
\begin{itemize}
\item[{\bf (A2)}] \emph{For each $R>0$, there exist two constants $0<r_1<r_2\le R$
such that
$$
S(r_1,r_2):=\{x \in \R^d:\ r_1\le |x|\le r_2\}\subseteq \rm {supp}(\nu).
$$}
\end{itemize} It is obvious that {\bf{(A2)}} is weaker than {\bf{(A1)}}.

For any $\theta, c, r>0$, define
\begin{equation*}
\beta_{\theta,c}(r)={4\Phi_0\Big(\frac{r}{2}\Big)}{\Phi_1\big(e^{c(\Phi_0(\frac{r}{2}))^{\frac{1}{\theta}}}\big)},
\end{equation*}
where  \begin{equation}\label{symbol-1}\Phi_0(r)=(2\pi)^{-d}\int e^{-r|q(\xi)|}\,d\xi,\quad \Phi_1(r)=\sup_{|\xi|\le r}|q(\xi)|.\end{equation} We have the following statement for intrinsic ultracontractivity of $(T_t^D)_{t \ge 0}$ under {\bf({A2})} and
{\bf ({LDI})}.

\begin{theorem}\label{t1-2}  Suppose that $X$ is a symmetric L\'evy process such that {\bf{(A2)}} holds true, and that
{\bf {(LDI)}} also holds for the bounded domain $D$.
If there exists a constant $\theta>0$ such that for any $c>0$,
$$\Psi_{\theta,c}(r):=\int_r^\infty \frac{\beta_{\theta,c}^{-1}(s)}{s}\,ds<\infty, \quad r\ge 1,$$ then
the associated Dirichlet semigroup $(T_t^D)_{t \ge 0}$ is
intrinsically ultracontractive, and  there are constants $c_1,c_2>0$ such that for all $t>0$ and $x,y\in D,$
$$ p^D(t,x,y)\le  c_1\Psi^{-1}_{\theta,c_2}(t\wedge 1)e^{-\lambda_1t} \phi_1(x)\phi_1(y).$$
Here, we use
the convention that $f^{-1}(r)=\inf\{s>0: f(s)\le r\}$ and $\inf\emptyset=\infty.$    \end{theorem}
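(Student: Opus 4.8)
The route is via the \emph{ground state transform} and a \emph{super Poincar\'e inequality}. Since $X$ is symmetric, $\phi_1=\hat\phi_1$; set $\mu=\phi_1^2\,dx$, a finite measure on $D$, and $\widetilde T_tf=e^{\lambda_1 t}\phi_1^{-1}T^D_t(\phi_1 f)$. Then $(\widetilde T_t)_{t\ge0}$ is a symmetric $C_0$ Markovian semigroup on $L^2(D;\mu)$ with kernel $\widetilde p(t,x,y)=e^{\lambda_1 t}p^D(t,x,y)/(\phi_1(x)\phi_1(y))$, so that $\|\widetilde T_t\|_{L^1(\mu)\to L^\infty}=\sup_{x,y}\widetilde p(t,x,y)$ and the asserted estimate is exactly $\|\widetilde T_t\|_{L^1(\mu)\to L^\infty}\le c_1\Psi_{\theta,c_2}^{-1}(t\wedge1)$. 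I would establish the super Poincar\'e inequality $\mu(f^2)\le r\,\widetilde{\mathcal E}(f,f)+\beta_{\theta,c_2}(r)\,\mu(|f|)^2$ for all $r>0$, where $(\widetilde{\mathcal E},\widetilde{\mathcal F})$ is the Dirichlet form of $(\widetilde T_t)$; then, using $-\tfrac{d}{dt}\|\widetilde T_tf\|_{L^2(\mu)}^2=2\widetilde{\mathcal E}(\widetilde T_tf,\widetilde T_tf)$ together with $\mu(|\widetilde T_tf|)\le\mu(|f|)$, the standard differential-inequality comparison gives $\|\widetilde T_t\|_{L^1(\mu)\to L^2(\mu)}^2\le 2\Psi_{\theta,c_2}^{-1}(t)$, hence $\|\widetilde T_t\|_{L^1(\mu)\to L^\infty}=\|\widetilde T_{t/2}\|_{L^1(\mu)\to L^2(\mu)}^2\le 2\Psi_{\theta,c_2}^{-1}(t/2)$, the truncation $t\wedge1$ coming from $\|\widetilde T_t\|_{L^2(\mu)\to L^2(\mu)}=1$ in the large-time regime.

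For the super Poincar\'e inequality, start from the free form. By Plancherel, $\mathcal E(g,g)=(2\pi)^{-d}\int q(\xi)|\widehat g(\xi)|^2\,d\xi$ and $\|g\|_{L^2}^2=(2\pi)^{-d}\int|\widehat g(\xi)|^2\,d\xi$, while $|\widehat g(\xi)|\le\|g\|_{L^1}$; splitting the frequency integral at $\{q\le 1/r\}$ and using $|\{q\le1/r\}|\le e\int e^{-rq(\xi)}\,d\xi=e(2\pi)^d\Phi_0(r)$ yields $\|g\|_{L^2}^2\le r\,\mathcal E(g,g)+e\,\Phi_0(r)\,\|g\|_{L^1}^2$ for $g$ in the domain $\mathcal F$ of $\mathcal E$. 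Applying this with $g=\phi_1 f\in\mathcal F^D$, invoking the identity $\mathcal E(\phi_1 f,\phi_1 f)=\widetilde{\mathcal E}(f,f)+\lambda_1\|\phi_1 f\|_{L^2}^2$ (a consequence of $L_D\phi_1=-\lambda_1\phi_1$), and absorbing the $\lambda_1$-term for $r\le(2\lambda_1)^{-1}$, one obtains
$$\mu(f^2)\le 2r\,\widetilde{\mathcal E}(f,f)+2e\,\Phi_0(r)\Big(\textstyle\int_D|f|\phi_1\,dx\Big)^2 .$$
The decisive point is then to trade the $L^1(dx)$-type quantity $\int_D|f|\phi_1\,dx$ for $\mu(|f|)=\int_D|f|\phi_1^2\,dx$, which is where the geometry of $D$ and the boundary decay of $\phi_1$ enter.

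Fix $r$ and a threshold $\delta=\delta(r)$. Split $\int_D|f|\phi_1\,dx$ over $\{\rho_{\partial D}\ge\delta\}$, where $\phi_1\ge m(\delta):=\inf\{\phi_1(x):\rho_{\partial D}(x)\ge\delta\}>0$ (continuity and strict positivity of $\phi_1$ on compacts in $D$), so that this part is $\le m(\delta)^{-1}\mu(|f|)$; and over $\{\rho_{\partial D}<\delta\}$, where Cauchy--Schwarz gives $\le\mu(f^2)^{1/2}|\{\rho_{\partial D}<\delta\}|^{1/2}$ and, by {\bf (LDI)} and Chebyshev's inequality, $|\{\rho_{\partial D}<\delta\}|\le C_\theta|\log\delta|^{-\theta}$. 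Choosing $\delta(r)=\exp(-c'\Phi_0(r)^{1/\theta})$ so that $|\{\rho_{\partial D}<\delta(r)\}|\le(16e\Phi_0(r))^{-1}$ and absorbing the resulting $\mu(f^2)$-term gives $\mu(f^2)\le 4r\,\widetilde{\mathcal E}(f,f)+C\,\Phi_0(r)\,m(\delta(r))^{-2}\,\mu(|f|)^2$. Feeding in the lower bound $m(\delta)\ge c\,\Phi_1(1/\delta)^{-1/2}$ replaces $\Phi_0(r)m(\delta(r))^{-2}$ by $C\Phi_0(r)\Phi_1\big(\exp(c'\Phi_0(r)^{1/\theta})\big)$, which after a harmless rescaling $r\mapsto r/C$ is $\beta_{\theta,c_2}(r)$ for an appropriate $c_2$; thus {\bf (LDI)} renders the boundary layer negligible while the growth of $q$ (through $\Phi_0,\Phi_1$) produces precisely the stated rate, and the value of $\theta$ from the hypothesis is carried through unchanged.

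The heart of the matter — and the step I expect to be the main obstacle — is the lower bound $m(\delta)\ge c\,\Phi_1(1/\delta)^{-1/2}$, i.e.\ $\phi_1(x)\gtrsim\Phi_1(1/\rho_{\partial D}(x))^{-1/2}$ near $\partial D$, to be shown with no regularity of $\partial D$ and using only {\bf (A2)} on $\nu$. Two ingredients enter. First, a uniform exit estimate via the characteristic exponent: from the Pruitt-type bound $\Pp^0\big(\sup_{s\le t}|X_s|>\rho\big)\le Ct\,\Phi_1(2/\rho)\le 4Ct\,\Phi_1(1/\rho)$ (using the elementary scaling $q(2\xi)\le4q(\xi)$, valid for every L\'evy exponent) one gets $\kappa\in(0,1)$ with $\Pp^x\big(X_s\in B(x,\rho/2),\ \tau_{B(x,\rho)}>s\big)\ge\tfrac14$ for $s=\kappa/\Phi_1(1/\rho)$. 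Second, a propagation step inside $D$: combining these local estimates with {\bf (A2)} (which furnishes jump mass of $\nu$ at arbitrarily small scales, needed to move transversally through the thin parts of $D$) and the connectedness of $D$, the process started at $x$ reaches a fixed compact core $K\subset D$ (on which $\phi_1\ge c_0>0$) while remaining in $D$, and iterating $\phi_1(x)=e^{\lambda_1 t}\Ee^x[\phi_1(X_t);\tau_D>t]$ along such paths yields the bound; the exponent $-\tfrac12$ of $\Phi_1$ emerges when, in the first (deepest, scale-$\rho_{\partial D}(x)$) step, one optimises the available survival time against the probabilistic cost of moving away from $\partial D$. Carrying out this propagation uniformly over \emph{all} bounded domains satisfying {\bf (LDI)} — so that the number of chaining steps, and hence the accumulated loss, stays controlled by the very logarithmic distance functional appearing in {\bf (LDI)} — is the principal technical challenge of the proof.
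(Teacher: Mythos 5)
Your overall architecture matches the paper's: ground state transform, a non-intrinsic super Poincar\'e inequality from the on-diagonal heat kernel bound, an ``intrinsification'' step that trades the plain $L^1$-norm for the $\phi_1$-weighted one, a lower bound on $\phi_1$ near $\partial D$, and Chebyshev together with {\bf (LDI)} to control the boundary layer. Your Plancherel derivation of the free inequality $\|g\|_{L^2}^2\le r\,\mathcal E(g,g)+e\Phi_0(r)\|g\|_{L^1}^2$ is a self-contained version of the paper's Lemma~\ref{l4-3} (which cites Wang's general criterion), and your boundary-layer split with Cauchy--Schwarz and absorption is essentially a re-derivation of the Ouhabaz--Wang lemma that the paper invokes as Lemma~\ref{l4-1}. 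So far so good.

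The genuine gap is the ground-state lower bound you announce, $\phi_1(x)\gtrsim\Phi_1(1/\rho_{\partial D}(x))^{-1/2}$. This is not what the chaining argument gives, and I do not believe it holds in the generality required. Starting at $y$ with $\delta=\rho_{\partial D}(y)$, the only cheap move is: survive in $B(y,\delta)$ for a time $s$ and then jump into a nearby ball at a fixed scale dictated by {\bf (A2)}. The jump probability is essentially linear in $s$ (factor $s\cdot\zeta$), while the survival estimate $\Pp^y(\tau_{B(y,\delta)}>s)\ge\frac12$ only holds for $s\lesssim 1/\Phi_1(1/\delta)$. Optimising $s$ gives probability $\asymp\zeta/\Phi_1(1/\delta)$, and the subsequent $N$ jumps at the fixed scale $\tilde r$ cost a $y$-independent constant because $D$ is bounded. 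Hence what comes out is $\phi_1(x)\gtrsim\Phi_1(1/\rho_{\partial D}(x))^{-1}$, which is exactly the paper's Lemma~\ref{l4-2}. The $-\tfrac12$ exponent is the sharp rate for $C^{1,1}$ domains (via boundary Harnack), not for arbitrary bounded domains with only {\bf (LDI)} and {\bf (A2)}; the ``optimisation of survival time against probabilistic cost'' you gesture at has no square-root structure here. Relatedly, you misattribute the role of {\bf (LDI)}: it enters only in the Chebyshev bound on $|\{\rho_{\partial D}<\delta\}|$ (the $\Theta$-estimate), \emph{not} in controlling the number of chaining steps, which is a fixed finite $N$ by boundedness of $D$ and the fixed jump scale from {\bf (A2)}. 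With the correct $\Phi_1^{-1}$ bound your $\beta$ acquires $\Phi_1(\cdot)^2$ rather than $\Phi_1(\cdot)$; this is harmless for the theorem as stated because the hypothesis is quantified over all $c>0$ and $\Phi_1$ has at most quadratic growth, but the exponent $-1/2$ itself is not available and your sketch for it does not close.
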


The intrinsic ultracontractivity for Dirichlet semigroup of symmetric $\alpha$-stable
process on a bounded H\"older domain of order $0$ was established in \cite{CS1}. Theorem \ref{t1-2} generalizes
such result to more general symmetric L\'evy process, whose L\'evy measure may be singular or may not satisfy {\bf (A1)}.
This can be seen from the following example.

\begin{example}\label{exm2} \it Let $X$ be a symmetric L\'{e}vy process with L\'{e}vy measure $\nu$ as follows
$$\nu(A)=\sum_{i=0}^{\infty}\int_{A}\frac{1}{|z|^{d+\alpha}}\I_{\{2^{-2i-1}\le |z|\le 2^{-2i}\}}\,dz,\quad A\in \mathscr{B}(\R^d)$$ for
some $\alpha\in(0,2)$.
Let $D$ be a bounded H\"older domain of order $0$. Then, the associated Dirichlet semigroup $(T_t^D)_{t \ge 0}$ is intrinsically ultracontractive, and
for every $\theta>{d}/{\alpha}$, there exist constants $c_1,c_2>0$ such that
$$p^D(t,x,y)\le  c_1e^{-\lambda_1 t}\exp\Big(c_2(1+t^{-\frac{d}{\alpha \theta-d}})\Big) \phi_1(x)\phi_1(y),\quad t>0, x,y\in D.
$$
\end{example}

\ \

The rest of this paper is arranged as follows. In Section
\ref{sec2} we present some preliminary results. Under assumptions {\bf (A1)} and {\bf (RC)}, we verify that
 for general L\'{e}vy process the Dirichlet heat kernel $p^D(t,x,y)$ is strictly positive for every $t>0$ and $x,y\in D$. In particular, Corollary \ref{c2-1} here also yields the strictly positivity of the transition density $p(t,x,y)$, which is interesting of its own.
In Section \ref{section3}, we prove Theorem \ref{t1-1} by making use of the methods in \cite{G,KS1,K} with some significant modifications.
The last section is devoted to the proof of Theorem \ref{t1-2}.
Comparing with the idea used in Section \ref{section3}, here we need establish the super Poincar\'{e} inequality for non-local Dirichlet forms and derive explicit lower bound for ground state in term of characteristic exponent.

\section{Preliminary Result: the Strict Positivity of Dirichlet Heat Kernel}\label{sec2}
The following lemma, similar to \cite[Lemma 2.5]{G}, is a direct consequence of Assumption {\bf (A1)}.
\begin{lemma}\label{l2-1}
Suppose {\bf {(A1)}} holds. Then for any $0<r<R_0$,
\begin{equation}\label{l2-1-1}
\delta(r):=\inf_{|x|\le R_0}\nu\big(B(x,r)\big)>0,
\end{equation}
where $R_0>0$ is the constant in
{\bf {(A1)}}.
\end{lemma}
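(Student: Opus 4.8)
The plan is to argue by contradiction together with a compactness argument. Suppose that the infimum in \eqref{l2-1-1} were zero for some fixed $0<r<R_0$. Then there would be a sequence $(x_n)_{n\ge1}$ with $|x_n|\le R_0$ such that $\nu\big(B(x_n,r)\big)\to 0$ as $n\to\infty$. Since the closed ball $\overline{B(0,R_0)}$ is compact, after passing to a subsequence we may assume $x_n\to x_\infty$ for some $x_\infty$ with $|x_\infty|\le R_0$. The aim is to deduce that $\nu$ assigns no mass to a small ball around $x_\infty$, contradicting the inclusion $B(0,R_0)\subseteq \operatorname{supp}(\nu)$ from assumption \textbf{(A1)}, which forces $\nu\big(B(x_\infty,\varepsilon)\big)>0$ for every $\varepsilon>0$ (since $x_\infty$ lies in $\overline{B(0,R_0)}$; one should be slightly careful here if $|x_\infty|=R_0$, but then one picks instead a nearby point of strictly smaller norm, see below).

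The key step is the following monotonicity-type observation: fix a radius $0<r'<r$. Because $x_n\to x_\infty$, for all $n$ large enough we have $|x_n-x_\infty|<r-r'$, hence $B(x_\infty,r')\subseteq B(x_n,r)$, and therefore
\[
\nu\big(B(x_\infty,r')\big)\le \nu\big(B(x_n,r)\big)\longrightarrow 0,
\]
which yields $\nu\big(B(x_\infty,r')\big)=0$. Now I need a point $x_*$ with $|x_*|<R_0$ and $B(x_*,\eta)\subseteq B(x_\infty,r')$ for some $\eta>0$, so that $B(x_*,\eta)\subseteq B(0,R_0)\subseteq\operatorname{supp}(\nu)$ forces $\nu\big(B(x_*,\eta)\big)>0$ while $\nu\big(B(x_*,\eta)\big)\le\nu\big(B(x_\infty,r')\big)=0$, a contradiction. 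If $|x_\infty|<R_0$ one simply takes $x_*=x_\infty$ and $\eta=r'$ (shrinking $r'$ if necessary so that $B(x_\infty,r')\subseteq B(0,R_0)$ too). If $|x_\infty|=R_0$, take $x_*$ on the segment from $x_\infty$ toward $0$ at distance $r'/2$ from $x_\infty$; then $|x_*|<R_0$, and $B(x_*,r'/2)\subseteq B(x_\infty,r')$, so one runs the same argument with $\eta=r'/2$.

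The only mild subtlety — and the part worth stating carefully rather than the main obstacle — is the interplay between the closed ball $\overline{B(0,R_0)}$ over which the infimum is taken and the open ball $B(0,R_0)$ appearing in \textbf{(A1)}; the boundary case $|x_\infty|=R_0$ is handled by the slight perturbation above. Everything else is elementary: compactness of $\overline{B(0,R_0)}$, monotonicity of $\nu$ under set inclusion, and the defining property of the support. No properties of $X$ beyond assumption \textbf{(A1)} are needed.
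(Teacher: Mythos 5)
Your proof is correct and follows essentially the same contradiction--compactness argument as the paper: pass to a convergent subsequence, use monotonicity of $\nu$ under set inclusion to transfer the vanishing to a fixed small ball, and contradict the support property. The small perturbation for the case $|x_\infty|=R_0$ is actually unnecessary: since $\operatorname{supp}(\nu)$ is closed and contains $B(0,R_0)$, it contains $\overline{B(0,R_0)}$ as well, so $x_\infty\in\operatorname{supp}(\nu)$ directly and $\nu\big(B(x_\infty,r')\big)>0$ for every $r'>0$.
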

\begin{proof}
Suppose that
\begin{equation*}
\inf_{|x|\le R_0}\nu\big(B(x,r_0)\big)=0
\end{equation*} for some $0<r_0<R_0$.
Then there exists a sequence $\{x_n\}_{n=0}^{\infty}\subseteq \overline{B(0,R_0)}$ such that
\begin{equation}\label{l2-1-2}
\lim_{n \rightarrow \infty} x_n=x_0
\end{equation} and
\begin{equation}\label{l2-1-3}
\lim_{n \rightarrow \infty}\nu\big(B(x_n,r_0)\big)=0.
\end{equation}
According to \eqref{l2-1-2} and \eqref{e1-1}, for $n$ large enough
\begin{equation*}
\nu\big(B(x_n,r_0))\ge \nu\Big(B\Big(x_0,\frac{r_0}{2}\Big)\Big)>0,
\end{equation*}
which contradicts with (\ref{l2-1-3}). This proves our desired conclusion
\eqref{l2-1-1}.
\end{proof}

 Next, we turn to the strictly positivity of the Dirichlet heat kernel $p^D(t,x,y)$. We first recall the parabolic property of the Dirichlet heat kernel $p^D(t,x,y)$ and the L\'{e}vy system of L\'{e}vy process $X$.

\begin{lemma}\label{l-p-1} \begin{itemize}
\item[(1)] The Dirichlet heat kernel $p^D(t,x,y)$ enjoys the parabolic property, i.e. for any $0<s<t$, $x,y \in D$ and stopping time $\tau$ with $\tau \le \tau_D$,
\begin{equation}\label{l2-2-1}
p^D(t,x,y)=\Ee^x\big[p^D(t-\tau\wedge s, X_{\tau\wedge s},y)\big].
\end{equation}
\item[(2)] Let $f$ be a non-negative measurable function on $\R_+\times \R^d\times \R^d$ vanishing on the diagonal. Then for every $x\in \R^d$ and stopping time $T$,
    \begin{equation}\label{l2-2-2}
    \Ee^x\bigg(\sum_{s\le T}f(s, X_{s-},X_s)\bigg)=\Ee^x \bigg[\int_0^T\int_{\R^d} f(s,X_s,X_s+z)\,\nu(dz)\,ds\bigg].
    \end{equation}
\end{itemize} \end{lemma}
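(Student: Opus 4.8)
The plan is to prove the two parts independently, each by a by-now-standard argument; the write-up will mostly consist of assembling the right classical facts and checking the regularity/boundary bookkeeping. I do not expect a genuine obstacle in either part.

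For part~(1) I would deduce the parabolic identity \eqref{l2-2-1} from the strong Markov property of the killed process $X^D$. Since $\sigma:=\tau\wedge s$ is a bounded stopping time with $\sigma\le s<t$, the strong Markov property at time $\sigma$ gives, for every $f\in C_c(D)$,
\[
T^D_t f(x)=\Ee^x\big[(T^D_{t-\sigma}f)(X^D_\sigma)\big]=\int_D \Ee^x\big[p^D(t-\sigma,X^D_\sigma,z)\big]\,f(z)\,dz ,
\]
and comparing this with $T^D_t f(x)=\int_D p^D(t,x,z)f(z)\,dz$ yields $p^D(t,x,z)=\Ee^x[p^D(t-\sigma,X^D_\sigma,z)]$ for Lebesgue-a.e.\ $z\in D$. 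To upgrade ``a.e.'' to ``every $z$'', I would observe that both sides are continuous in $z\in D$: for the right-hand side this follows by dominated convergence from the continuity of $p^D(r,w,\cdot)$ in its last variable (valid for each fixed $r>0$, $w\in D$) together with the uniform bound $\sup_{r\ge t-s}\sup_{w,z\in D}p^D(r,w,z)\le c(t-s)$, which comes from Chapman--Kolmogorov and the sub-Markov inequality $\int_D p^D(u,v,z)\,dv\le1$. Finally I would replace $X^D_\sigma$ by $X_\sigma$: the two agree on $\{\sigma<\tau_D\}$, while on $\{\sigma=\tau_D\}$ right-continuity of the paths (and openness of $D$) forces $X_{\tau_D}\notin D$, so $p^D(t-\sigma,X^D_\sigma,z)=p^D(t-\sigma,X_\sigma,z)=0$ there. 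This is \eqref{l2-2-1}; the argument is as in the proof of \cite[Theorem~2.4]{CZ}.

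For part~(2) I would use the Poisson structure of the jumps of the L\'evy process $X$: by the L\'evy--It\^o decomposition the jump measure $N(ds,dz)=\sum_{s:\,\Delta X_s\ne0}\delta_{(s,\Delta X_s)}$ is a Poisson random measure on $(0,\infty)\times(\R^d\setminus\{0\})$ with intensity $ds\,\nu(dz)$, where $\Delta X_s:=X_s-X_{s-}$. Because $f$ vanishes on the diagonal, only jump times contribute to the sum, hence
\[
\sum_{s\le T}f(s,X_{s-},X_s)=\int_{(0,T]}\int_{\R^d\setminus\{0\}}f\big(s,X_{s-},X_{s-}+z\big)\,N(ds,dz).
\]
The integrand $H(s,z):=f(s,X_{s-},X_{s-}+z)\,\I_{\{s\le T\}}$ is non-negative and predictable ($s\mapsto X_{s-}$ and $s\mapsto\I_{\{s\le T\}}$ are both left-continuous and adapted, the latter because $T$ is a stopping time), so the compensation formula for Poisson random measures applies — first to the bounded truncations $H\wedge n$ and then by monotone convergence — and gives
\[
\Ee^x\Big[\int_{(0,\infty)}\int_{\R^d\setminus\{0\}}H(s,z)\,N(ds,dz)\Big]=\Ee^x\Big[\int_0^\infty\int_{\R^d\setminus\{0\}}H(s,z)\,\nu(dz)\,ds\Big].
\]
Since $X$ has at most countably many jumps a.s., $X_{s-}=X_s$ for Lebesgue-a.e.\ $s$, and $\nu(\{0\})=0$, so in the $ds$-integral one may replace $X_{s-}$ by $X_s$ and $\R^d\setminus\{0\}$ by $\R^d$; this is exactly \eqref{l2-2-2}.

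The only points that need care are: for part~(1), the joint measurability and the local-in-time boundedness of $(r,z)\mapsto p^D(r,z,y)$ (so that the $\sigma$-expectations above make sense and are continuous in $z$) and the clean identification of $X^D_\sigma$ with $X_\sigma$ on the two complementary events — in particular the use of path right-continuity to place $X_{\tau_D}$ outside the open set $D$; and for part~(2), the justification of the compensation formula for the unbounded predictable integrand $H$ in the presence of a possibly unbounded stopping time $T$, which the truncation-plus-monotone-convergence step disposes of once the predictability of $\I_{\{s\le T\}}$ is noted. Everything else is a direct application of the Markov property, Chapman--Kolmogorov, and the Poisson-point-process description of the jumps of $X$.
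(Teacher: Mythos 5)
Your proposal is correct on both counts, but for part~(1) it takes a genuinely different route from the paper. The paper's proof of \eqref{l2-2-1} is the standard space-time martingale argument (following \cite[Theorem~4.5]{CK}): with $t_0>0$ and $y\in D$ fixed it sets $q(s,x)=p^D(t_0-s,x,y)$, forms the space-time process $Y_s=(t+s,X^D_s)$, verifies by Chapman--Kolmogorov and the Markov property that $\{q(Y_s)\}$ is a martingale, and then invokes the optional sampling theorem at $s\wedge\tau$. That route yields the identity pointwise in $y$ directly. You instead apply the strong Markov property at $\sigma=\tau\wedge s$ to the semigroup acting on test functions $f\in C_c(D)$, deduce $p^D(t,x,z)=\Ee^x[p^D(t-\sigma,X^D_\sigma,z)]$ for Lebesgue-a.e.\ $z$, and then bridge from a.e.\ to everywhere via the continuity of $p^D(r,w,\cdot)$ and the uniform bound $c(t-s)$ on $p^D(r,\cdot,\cdot)$ for $r\ge t-s$. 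Your route avoids martingale/optional-sampling machinery but requires the continuity-upgrade step and a careful handling of the random additive time $t-\sigma$ in the strong Markov step; the paper's route avoids the a.e.-to-everywhere bridge at the price of setting up the space-time martingale. Your final step replacing $X^D_\sigma$ by $X_\sigma$ (splitting on $\{\sigma<\tau_D\}$ versus $\{\sigma=\tau_D\}$ and using right-continuity plus openness of $D$ to place $X_{\tau_D}\notin D$) is correct and is done implicitly in the paper.

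For part~(2) the paper simply refers to the argument of \cite[Section~5]{CK1}; you write out the underlying proof in full via the L\'evy--It\^o jump measure, the compensation formula for Poisson random measures with non-negative predictable integrands, and the truncation plus monotone-convergence step. This is exactly the argument the cited reference uses, so here you are giving a complete proof where the paper cites one.
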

\begin{proof}
 (1) We mainly follow the proof of \cite[Theorem 4.5]{CK} to prove the parabolic property for
$p^D(t,x,y)$. For fixed $t_0>0$ and $y \in D$, let
$q(s,x)=p^D(t_0-s,x,y)$ on $[0,t_0)\times D$.
For every
$(t,x)\in [0,t_0)\times D$, define a $\R_+ \times D$-valued process
$Y$ by $Y_s:=(t+s,X^D_s)$ for $0\le s < t_0-t$, and denote $\{\widetilde {\mathscr{F}}_s,\ 0\le s<t_0-t\}$ by the associated natural filtration. The law of the space-time process $s\mapsto Y_s$ starting from $(t,x)$ will be denoted by $\Pp^{(t,x)}$. Since for each $t>0$, $\sup_{x,y \in D}p^D(t,x,y)<\infty$,
$q(Y_s)$ is integrable for every $0\le s < t-t_0$. Then, for
every $0<r<s< t_0-t$,
\begin{equation*}
\begin{split}
 \Ee^{(t,x)}\left[q\left(Y_s\right)\Big|\widetilde {\mathscr{F}}_r\right]& =
\Ee^{x}\left[p^D\left(t_0-t-s,X_s^D,y\right)\Big| \mathscr{F}_r\right]\\
&=\Ee^{X_r^D}\left[p^D\left(t_0-t-s,X_{s-r}^D,y\right)\right]\\
&=\int_D p^D\left(s-r,X_r^D,z\right)p^D\left(t_0-t-s,z,y\right)\, dz\\
&=p^D\left(t_0-t-r,X_r^D,y\right)=q\left(Y_r\right),
\end{split}
\end{equation*}
where in the second equality $\{\mathscr{F}_s,\ 0\le s< t_0-t\}$ denotes the natural filtration generated by $X^D$ and we have used
the Markov property of $X^D$, and the fourth equality follows from the semigroup property of the Dirichlet heat kernel.
Hence, $\{q(Y_s),\widetilde {\mathscr{F}}_s,\  0\le s<t_0-t\}$  is a martingale.

For every $t>0$, choosing $t_0=t$ in the definition of $q$ above and using the optional sampling theorem, we find
for every $0<s<t$ and stopping time $\tau \le \tau_D$
\begin{equation*}
\begin{split}
& p^D(t,x,y)=q(0,x)=\Ee^{(0,x)}\left[q\left(Y_{s\wedge \tau}\right)\right]
=\Ee^x\left[p^D\left(t-s\wedge \tau, X^D_{s\wedge \tau},y\right)\right].
\end{split}
\end{equation*}
This finishes the proof of \eqref{l2-2-1}.

(2) We can follow the argument of \cite[Section 5]{CK1} (in particular \cite[(5.3)]{CK1}) to get \eqref{l2-2-2}, and
the details are omitted here.
\end{proof}

The main result of this section is the following
\begin{proposition}\label{p2-1}
Let $X$ be a (not necessarily symmetric) L\'{e}vy process satisfying ${(\bf{A1})}$, and let
$D$ be an open (not necessarily bounded) set such that ${(\bf{RC})}$ holds true. Then,
\begin{equation}\label{p2-1-1aa}
p^D(t,x,y)>0,\quad \forall\ t>0,x,y\in D.
\end{equation}
\end{proposition}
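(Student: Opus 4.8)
The plan is to establish strict positivity of $p^D(t,x,y)$ by a chaining argument that moves mass from $x$ to $y$ in finitely many jumps, each of length less than $R_0$, using the L\'evy system formula \eqref{l2-2-2} to give a quantitative lower bound on the probability of making such a jump into a prescribed small ball while staying inside $D$. First I would reduce the problem to a local one: by the semigroup (Chapman--Kolmogorov) property of $p^D$, it suffices to show that for every $x,y\in D$ there is \emph{some} time $t_0>0$ with $p^D(t_0,x,y)>0$, since then for any $t>0$ we can write $p^D(t,x,y)=\int_D\int_D p^D(t/3,x,u)\,p^D(t/3,u,v)\,p^D(t/3,v,y)\,du\,dv$ and use continuity plus an intermediate positivity step; more cleanly, I would prove the statement first for $x,y$ lying in the same connected component and then splice components together via {\bf (RC)}.

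The core estimate is the following one-step lemma: if $U\subseteq D$ is a (small) open ball and $x\in D$ with $\mathrm{dist}(x,U)<R_0$ and, say, a whole tube around the segment realizing the jump stays in $D$, then there exists $t_1>0$ and $c>0$, depending only on the relevant geometry and on $\delta(r)$ from Lemma \ref{l2-1}, such that $\Pp^x(X^D_{t_1}\in U)\ge c$. To see this, I would use the classical decomposition of the path into: (i) staying near $x$ without leaving a small neighborhood up to time $t_1$ (positive probability by continuity of $p$ and the fact that the small-ball probability $\Pp^x(\sup_{s\le t_1}|X_s-x|<\varepsilon)$ is positive for small $t_1$, which follows from $p(t,\cdot,\cdot)>0$ and dominated convergence, or directly from the L\'evy--It\^o structure), (ii) making exactly one jump of size in $[r_1,r_2]\subseteq B(0,R_0)$ landing in the target ball --- here \eqref{l2-2-2} applied to $f(s,u,u+z)=\I_{U}(u+z)\I_{B(0,R_0)\cap(\text{good directions})}(z)$ together with Lemma \ref{l2-1} gives that the expected number of such jumps over $[0,t_1]$ is bounded below by a positive multiple of $t_1$, hence (by a second-moment or Markov-type argument, or by comparison with a Poisson count) the probability of at least one such jump is positive --- and (iii) after the jump, again staying put until time $t_1$. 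Combining (i)--(iii) via the strong Markov property at the jump time and using that all three pieces can be arranged to keep the path inside $D$ yields $\Pp^x(X^D_{t_1}\in U)>0$, and then $p^D(2t_1,x,z)=\int_D p^D(t_1,x,u)p^D(t_1,u,z)\,du>0$ for $z$ near the landing point, by continuity of $p^D(t_1,\cdot,\cdot)$ and the fact that $p^D(t_1,u,z)$ is positive for $u,z$ in a common small ball (the purely "diffusive/small-jump" positivity, which one gets from $p(t,x,y)>0$ plus an exit-time estimate, or again from \eqref{l2-0} with $\tau=\tau_D$).

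Iterating the one-step lemma along a chain of overlapping balls connecting $x$ to $y$ --- within a single component one uses connectedness and compactness of a connecting curve to produce finitely many such balls, each center within $R_0$ of the next and with a tube staying in $D$; across components one uses {\bf (RC)} to jump the gap of size $<R_0$ directly --- and multiplying the finitely many positive lower bounds (with the total elapsed time being a finite sum of the $t_i$'s) gives $p^D(T,x,y)>0$ for that particular $T=T(x,y)$. Finally, upgrading from "some $T$" to "all $t>0$" is done by the parabolic/semigroup property together with the already-established positivity of $p^D$ on small balls for small times: $p^D(t,x,y)\ge \int_{B}\int_{B'} p^D(\varepsilon,x,u)p^D(T,u,v)p^D(t-T-\varepsilon,v,y)\,du\,dv$ for appropriate small balls $B\ni x'$, $B'\ni y'$ and suitably small $\varepsilon$, handling $t>T$ directly and $t\le T$ by first shrinking via a chain with smaller time-steps (the one-step lemma's $t_1$ can be taken arbitrarily small, so the construction already produces positivity for all sufficiently small total times, and then Chapman--Kolmogorov propagates it to all times).

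The main obstacle I expect is step (ii): making the L\'evy-system bound \eqref{l2-2-2} actually yield a \emph{lower} bound on $\Pp^x(\text{at least one good jump before }t_1,\ \text{staying in }D)$ rather than just on an expectation. The clean way around this is to localize: restrict attention to a stopping time $\sigma = \tau_{D'}\wedge t_1$ for a slightly smaller open set $D'\Subset D$ (or a small ball around $x$), apply \eqref{l2-2-2} with $T=\sigma$ to the indicator of "jump from inside $D'$ into the target", note that on the event of no such jump the sum is zero while on its complement it is at least $1$, so the probability in question is at least $\Ee^x[\text{that sum}]$ minus correction terms for multiple jumps --- and the multiple-jump correction is $O(t_1^2)$ while the main term is $\gtrsim t_1\,\delta(r)\,\Pp^x(\sigma\ \text{"survives", i.e. no exit})$, which is $\gtrsim t_1$ for $t_1$ small; hence the probability is positive for small $t_1$. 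This, together with careful bookkeeping that every intermediate position and every jump segment can be chosen to lie in $D$ (using that $D$ is open, so each point has a ball inside $D$, and {\bf (RC)} to cross component gaps), is the technical heart of the argument. Everything else is routine measure theory and the semigroup property.
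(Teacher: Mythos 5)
Your overall strategy — chain balls from $x$ to $y$, each consecutive pair within distance $R_0$, use the L\'evy system to force a jump from one ball into the next, use the exit-time estimate $\Pp^x(\tau_{B(x,r)}>t)\ge 1-ct\sup_{|\xi|\le 1/r}|q(\xi)|$ for the ``stay put'' and ``survive'' pieces, and splice everything with Chapman--Kolmogorov, first within a component and then across components via {\bf (RC)} — is exactly the architecture of the paper's proof. However, there are two places where your execution diverges and where, as written, you would have more work to do than you acknowledge.

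First, the one-step jump estimate. You apply the L\'evy system formula \eqref{l2-2-2} over a fixed time window and then try to pass from a lower bound on $\Ee[\#\text{good jumps}]$ to a lower bound on $\Pp(\ge 1\text{ good jump})$; you correctly identify this as the ``main obstacle'' and propose a second-moment or Bonferroni correction for multiple jumps. The paper sidesteps this entirely by evaluating the L\'evy system at the \emph{first exit time} from $B_i$: since $B_i$ and the target sub-ball $\tilde B_{i+1}\subset B_{i+1}$ are chosen disjoint, exiting $B_i$ into $\tilde B_{i+1}$ can only happen by a jump, and the Ikeda--Watanabe-type identity gives $\Pp^y\big(a<\tau_{B_i}<b,\ X_{\tau_{B_i}}\in \tilde B_{i+1}\big)=\int_a^b\int_{B_i}p^{B_i}(s,y,z)\,\nu(\tilde B_{i+1}-z)\,dz\,ds$, which is a probability, not an expected count. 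This kills the multiple-jump bookkeeping at the source, and it is why the paper can take the time windows to be $((1-1/N)\tilde t_1,\tilde t_1)$ with no second-moment argument. Your route is not wrong, but it requires an additional estimate on $\Ee[N(N-1)]$ that you have not supplied.

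Second, and this is the more substantial gap: going from $\Pp^x(X_t^D\in U)>0$ for small open $U$ to the pointwise statement $p^D(t,x,y)>0$. Your proposal rests on a ``small-ball positivity'' claim, namely that $p^D(t_1,u,z)>0$ whenever $u,z$ lie in a common small ball well inside $D$, which you say follows from $p(t,\cdot,\cdot)>0$ plus an exit-time estimate. That is not immediate: from $p^B(t_1,u,z)=p(t_1,u,z)-\Ee^u[p(t_1-\tau_B,X_{\tau_B},z)\I_{\{\tau_B\le t_1\}}]$ the subtracted term involves $p(s,\cdot,z)$ at small $s$, which is unbounded as $s\downarrow 0$, so the exit-time bound alone does not close the estimate without some off-diagonal decay assumption that the paper does not impose. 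The paper avoids the issue with a cleaner trick you do not use: step (1) of its proof yields $p^D(t/2,x,\cdot)>0$ a.e.\ for the process $X$ and, running the same argument for $\hat X$ (whose L\'evy measure $\hat\nu$ also satisfies {\bf (A1)}), $\hat p^D(t/2,y,\cdot)>0$ a.e.; then
\begin{equation*}
p^D(t,x,y)=\int_D p^D\bigl(\tfrac{t}{2},x,z\bigr)\,\hat p^D\bigl(\tfrac{t}{2},y,z\bigr)\,dz>0
\end{equation*}
because the integrand is a.e.\ positive. This duality step is what converts ``positive mass'' into ``positive density'' with no pointwise lower bound on $p^D$ near the diagonal. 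If you want to complete your version of the proof without duality, you need to actually justify the small-ball positivity of $p^D$, and as matters stand that is an unproved and nontrivial claim under the paper's hypotheses.

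Minor remark: your reference to \eqref{l2-0} appears to be a typo (no such label); you presumably mean the parabolic property \eqref{l2-2-1}.
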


As a direct consequence of Proposition \ref{p2-1}, we have the following statement, which is interesting of its own.
\begin{corollary} \label{c2-1} Let $X$ be a (not necessarily symmetric) L\'{e}vy process satisfying ${(\bf{A1})}$. For any connected (not necessarily bounded) open set $D$,
$$p^D(t,x,y)>0,\quad \forall\ t>0,x,y\in D.$$ In particular,
\begin{equation*}
p(t,x,y)>0,\quad \forall\ t>0,x,y\in \R^d,
\end{equation*}
where $p(t,x,y)$ is the transition density for the process $X$.
\end{corollary}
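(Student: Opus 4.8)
The plan is to prove Proposition \ref{p2-1} first, and then obtain Corollary \ref{c2-1} by the simple observation that a connected open set trivially satisfies {\bf(RC)} (take $m=1$ and $D_1=D$), and that $\R^d$ itself is connected and open, so that $p(t,x,y)=p^{\R^d}(t,x,y)>0$ follows by applying the proposition with $D=\R^d$. Thus the whole content is in Proposition \ref{p2-1}.

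For the proposition, the natural strategy is a chaining argument built from a one-step local lower bound. First I would establish the following basic estimate: if $x_0\in D$ and $B(x_0,3r)\subseteq D$ for some small $r<R_0/2$, then there are constants $t_0=t_0(r)>0$ and $c_0=c_0(r)>0$ such that $p^D(t,x,y)\ge c_0$ for all $t\in(0,t_0]$ and all $x,y\in B(x_0,r)$. This is proved by the standard Lévy-system/Ikeda--Watanabe decomposition: using the strong Markov property together with the parabolic property \eqref{l2-2-1} and the Lévy system formula \eqref{l2-2-2}, one writes the killed density as the free density minus an exit term, and uses that the process started in $B(x_0,r)$ stays inside $B(x_0,3r)\subseteq D$ with probability bounded away from zero on a short time interval, while $\delta(\cdot)$ from Lemma \ref{l2-1} (equivalently {\bf(A1)}) provides a uniform positive jump rate into any small ball within reach. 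This is the technical heart of the argument, and the main obstacle: one must be careful that the bound is uniform over the relevant balls and that the time interval does not shrink to zero, which forces one to work only with balls of a fixed radius and to exploit compactness.

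Next I would upgrade this to a statement allowing the process to travel a distance up to $R_0$ in one step: for $x_0,y_0\in D$ with small balls $B(x_0,r),B(y_0,r)$ contained in $D$ and $\mathrm{dist}(B(x_0,r),B(y_0,r))<R_0$, there are $t_1,c_1>0$ with $p^D(t,x,y)\ge c_1$ for $t\in(0,t_1]$, $x\in B(x_0,r)$, $y\in B(y_0,r)$. The mechanism is: in time $t/3$ the process stays near $x_0$ with positive probability; then it makes a single jump of size less than $R_0$ landing in $B(y_0,r')$ (rate bounded below by $\delta$, using {\bf(A1)}); then in time $t/3$ it stays near $y_0$. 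Splicing the three pieces via Chapman--Kolmogorov and the first estimate gives the claim. Then, within a single connected component $D_i$, any two points can be joined by a finite chain of such balls staying inside $D_i$ (connectedness plus openness), and the distance between consecutive components in the {\bf(RC)} chain is less than $R_0$, so concatenating over all the balls and all the components $D_1,\dots,D_m$ from {\bf(RC)} yields $p^D(s,x,y)>0$ for all sufficiently small $s>0$.

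Finally, to pass from small times to all $t>0$: given arbitrary $t>0$ and $x,y\in D$, pick $s>0$ small enough that the chaining gives a positive lower bound, and use the semigroup property together with irreducibility — more concretely, $p^D(t,x,y)=\int_D\int_D p^D(s,x,u)\,p^D(t-2s,u,v)\,p^D(s,v,y)\,du\,dv$, where $p^D(t-2s,u,v)$ is nonnegative and, by irreducibility, strictly positive on a set of positive measure, while $p^D(s,x,u)$ and $p^D(s,v,y)$ are strictly positive for $u$ near $x$ and $v$ near $y$ respectively (shrinking $s$ if needed so the relevant points are reachable); positivity of the integral follows. The only delicate point in this last step is ensuring the "near $x$" and "near $y$" neighborhoods are nonempty, which is automatic since $D$ is open. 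This completes the proof of Proposition \ref{p2-1}, and Corollary \ref{c2-1} follows immediately as indicated above.
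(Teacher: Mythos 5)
Your reduction of the corollary to the proposition is fine, and the high-level "chain through a sequence of balls and pass through the Lévy system" idea does resemble what the paper does in part of its proof. But there is a genuine gap in your first technical lemma, and it is precisely the step you flag as the "technical heart." You claim that if $B(x_0,3r)\subseteq D$ then there are $t_0, c_0>0$ with $p^D(t,x,y)\ge c_0$ for all $t\in(0,t_0]$ and all $x,y\in B(x_0,r)$. This cannot be correct as stated: for $x\ne y$, $p^D(t,x,y)\to 0$ as $t\to 0$, so no $t$-uniform lower bound on a whole time interval $(0,t_0]$ is possible. Even the weaker version "for each fixed $t$ there is $c_0(t)>0$" is not easy to obtain by the route you sketch. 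Writing $p^D(t,x,y)=p(t,x,y)-\Ee^x[p(t-\tau_B,X_{\tau_B},y);\tau_B\le t]$ and bounding the exit term requires controlling $p(s,u,y)$ for $s$ small and $|u-y|\ge 2r$; for a general Lévy process with possibly singular Lévy measure the only available bound is the global one $p(s,\cdot,\cdot)\le c(s)$, and $c(s)$ typically blows up as $s\to 0$. The survival probability $\Pp^x(\tau_B\le t)$ does go to zero, but not necessarily fast enough to beat $\sup_{s\le t}c(s)$ (already for $\alpha$-stable with $d>\alpha$ the naive product diverges), and no off-diagonal decay estimate for $p(s,u,y)$ is available under {\bf(A1)} alone. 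So the Ikeda--Watanabe sketch does not close without further assumptions, and your chaining argument has no valid base case to anchor it.

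The paper avoids this obstacle by never attempting a direct pointwise near-diagonal lower bound on $p^D$. Instead it first proves, via a chaining argument using \eqref{l2-2-1}, \eqref{l2-2-2} and the exit-time estimate from \cite[Theorem 5.1]{BSW}, only the $L^1$ statement that $T_t^D(\I_U)(x)=\Pp^x(X_t^D\in U)>0$ for every open connected $U\subseteq D$; this yields $p^D(t,x,z)>0$ for a.e.\ $z$ (and the same for the dual process $\hat X$). It then upgrades to pointwise strict positivity by Chapman--Kolmogorov together with duality: $p^D(t,x,y)=\int_D p^D(\tfrac t2,x,z)\hat p^D(\tfrac t2,y,z)\,dz$, where the integrand is a product of two a.e.\ positive functions of $z$, hence a.e.\ positive, hence the integral is strictly positive. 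This "a.e.\ positivity plus dual-process factorization" device is the key idea your proposal is missing, and it is what allows the argument to go through for general (possibly singular) Lévy measures under {\bf(A1)} only, without any quantitative heat-kernel lower bound.
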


\begin{proof}[Proof of Proposition $\ref{p2-1}$] The proof is split into three steps, and the first two steps are devoted to the proof of Corollary \ref{c2-1}.

(1) We show that for any connected open set $D$, $T_t^D(\I_U)(x)>0$ for every $x\in D$, connected open set $U\subseteq D$ and $t>0$.
According to
\cite[Theorem 5.1]{BSW}, there is a constant $c_0>0$ such that for every $r,t>0$ and $x \in \R^d$
\begin{equation*}\label{p2-1-2}
\Pp^x \big(\tau_{B(x,r)}>t\big)\ge 1 -c_0t\sup_{|\xi|\le \frac{1}{r}}|q(\xi)|.
\end{equation*}
In particular, for any $r>0$, we can find a constant $t(r)>0$ such that
\begin{equation*}
\Pp^x\big(\tau_{B(x,r)}>t(r)\big)\ge \frac{1}{2},\ \ \forall\ x \in \R^d.
\end{equation*}

Let $R_0$ be the constant in
Assumption {\bf (A1)}. Since $D$ is connected,
for every $x \in D$, connected open set $U$ and $t>0$,
there exist constants $\tilde t_1:=\tilde t_1(x,U,t)>0$, $0<
\tilde r_1:=\tilde r_1(x,U,t)<\frac{R_0}{8}$ and a sequence $\{x_i\}_{i=1}^{N+1}\subseteq  D$ with $N\ge \big[\frac{t}{\tilde t_1}\big]\ge 3$, such that the following properties hold:
\begin{itemize}
\item[(i)] for every $1\le i \le N$, $B_i \subseteq D$, $B_{N+1}\subseteq U$, and $B_i \bigcap B_{i+1}=\varnothing$, where $B_i:=B(x_i,2\tilde r_1)$ and $x_1=x$. (Note that we do not require that $B_i \bigcap B_{j}=\varnothing$ for any $i\neq j$, and so it may happen that $B_i=B_j$ for some $j\neq i+1$.)

\item[(ii)] for every $1\le i \le N$ and $y_i \in B_i$, $|y_i-y_{i+1}|\le \frac{R_0}{2}$.

\item[(iii)] for every $z \in \R^d$,
\begin{equation}\label{p2-1-2aa}
\Pp^z \big(\tau_{B(z,\tilde r_1)}>2\tilde t_1\big)\ge \frac{1}{2}.
\end{equation}
\end{itemize} Below, define a sequence of
stopping times $\{\tilde \tau_{B_{i}}\}_{i=1}^{N}$ as follows
\begin{equation*}
\begin{split}
&\tilde \tau_{B_0}=0,\ \tilde \tau_{B_1}:=\tau_{B_1},\,\, \tilde \tau_{B_{i+1}}:=\inf\{t>\tilde \tau_{B_i}: X_t \notin B_{i+1}\},\quad  1 \le i \le N-1,
\end{split}
\end{equation*} and let $\tilde B_i:=B(x_i,\tilde r_1)$ for $1 \le i \le N+1$.
Then, we have
\begin{equation}\label{p2-1-3a}
\begin{split}
&T_{t}^D(\I_U)(x)\\
&\ge T^D_{t}(\I_{B_{N+1}})(x)\\
&=\Ee^x\Big(\I_{B_{N+1}}(X^D_{t})\Big)\\
&\ge \Pp^x\Big(
\big(1-\frac{1}{N}\big)\tilde t_1<\tilde{\tau}_{B_{i}}-\tilde{\tau}_{B_{i-1}}<\tilde t_1 \ {\rm for\ each}\ 1\le i \le N,\\
&\qquad\quad\,\,\textrm{ and } \forall_{s\in[\tilde{\tau}_{B_{N}},t] } X^D_{s} \in B_{N+1}\Big)\\
&\ge \Pp^x\Big(
\big(1-\frac{1}{N}\big)\tilde t_1<\tilde{\tau}_{B_{i}}-\tilde{\tau}_{B_{i-1}}<\tilde t_1\textrm{ and }X_{\tilde{\tau}_{B_{i}}}\in \tilde B_{i+1}\ {\rm for\ each}\ 1\le i \le N,\\
&\qquad\quad\,\,\textrm{ and } \forall_{s\in[\tilde{\tau}_{B_{N}},t] } X^D_{s} \in B_{N+1}\Big)\\
& = \Pp^x\Big(\big(1-\frac{1}{N}\big)\tilde t_1<{\tau}_{B_{1}}<\tilde t_1, X_{{\tau}_{B_{1}}}\in
\tilde B_{2};\\
&\qquad\quad\,\,\cdot\Pp^{X_{\tilde{\tau}_{B_{1}}}}\Big(\big(1-\frac{1}{N}\big)\tilde t_1<\tau_{B_{2}}
<\tilde t_1,X_{{\tau}_{B_{2}}}\in
\tilde B_{3};\\
&\qquad\quad\,\,\cdot\Pp^{X_{\tilde{\tau}_{B_{2}}}}\Big(\cdots \Pp^{X_{\tilde{\tau}_{B_{N-1}}}}\Big(\big(1-\frac{1}{N}\big)\tilde t_1<{\tau}_{B_{N}}<
\tilde t_1,X_{\tau_{B_{N}}}\in
\tilde B_{N+1};\\
&\qquad\quad\,\,\cdot
\Pp^{X_{\tilde{\tau}_{B_{N}}}}\Big(\forall_{s\in[0,t-\tilde {\tau}_{B_N}]} X_{s}
\in B_{N+1}\Big)\Big)\cdots\Big)\Big)\Big),
\end{split}
\end{equation}
where in the last equality we used the strong Markov property.

Note that, if for any $1\le i\le N$, $$\big(1-\frac{1}{N}\big)\tilde t_1<\tilde{\tau}_{B_{i}}-\tilde{\tau}_{B_{i-1}}<\tilde t_1,$$
then
\begin{equation*}
\begin{split}
t-\tilde \tau_{B_N}&\le t-N\big(1-\frac{1}{N}\big)\tilde t_1
= t-N\tilde t_1+ \tilde t_1 \le 2 \tilde t_1,
\end{split}
\end{equation*}
where the last inequality follows from the fact $t-N\tilde t_1\le \tilde t_1$.
Thus, when $X_{\tilde{\tau}_{B_{N}}}\in \tilde B_{N+1}$ and
$(1-\frac{1}{N})\tilde t_1<\tilde{\tau}_{B_{i}}-\tilde{\tau}_{B_{i-1}}<\tilde t_1$ for all $1 \le i \le N$, we have
\begin{equation*}
\begin{split}\Pp^{X_{\tilde{\tau}_{B_{N}}}}\Big(\forall_{s\in[0, t-\tilde {\tau}_{B_N}]} X_{s}
\in B_{N+1}\Big)&
\ge \inf_{y\in \tilde{B}_{N+1}}\Pp^y\Big(X_t\in B(y, \tilde r_1) \textrm{ for all }0 < t \le 2\tilde t_1\Big)\\
&\ge \inf_{y\in \tilde{B}_{N+1}}\Pp^y\big(\tau_{B(y,\tilde r_1)}>2\tilde  t_1\big)\ge \frac{1}{2}.
\end{split}
\end{equation*}
where the last inequality we used \eqref{p2-1-2aa}.

On the other hand,  for any
$1\le i\le N$, if $X_{\tilde{\tau}_{B_{i-1}}} \in \tilde B_{i}$, then, according to the L\'{e}vy system of the process $X$
(see Lemma \ref{l-p-1}),
\begin{align*}
&\Pp^{X_{\tilde{\tau}_{B_{i-1}}}}\Big(\big(1-\frac{1}{N}\big)\tilde t_1<{\tau}_{B_{i}}<\tilde t_1,X_{{\tau}_{B_{i}}}\in
\tilde B_{i+1}\Big)\\
&\ge \inf_{y \in \tilde B_i}\int_{(1-\frac{1}{N})\tilde t_1}^{\tilde t_1}\int_{B_i}p^{B_i}(s,y,z)\Big(
\int_{\tilde B_{i+1}-z}\nu(dw)\Big)\,dz\,ds\\
&\ge \frac{\tilde t_1}{N}\Big(\inf_{y \in \tilde B_i}\Pp^y(\tau_{B_i}>\tilde t_1)
\Big)\Big(\inf_{z \in B_i}\nu\big(B(x_{i+1}-z,\tilde r_1)\big)\Big)\\
&\ge \frac{\tilde t_1}{N}\Big(\inf_{y \in \tilde B_i}\Pp^y(\tau_{B(y,\tilde r_1)}>\tilde t_1)
\Big)\Big(\inf_{z \in B_i}\nu\big(B(x_{i+1}-z,\tilde r_1)\big)\Big).
\end{align*}
By \eqref{p2-1-2aa}, $$\inf_{y \in \tilde B_i}\Pp^y(\tau_{B(y,\tilde r_1)}>\tilde t_1)>\frac{1}{2}.$$ For every $z \in B_i$, since
$|x_{i+1}-z|\le \frac{R_0}{2}$ and $\tilde r_1<\frac{R_0}{2}$, $B(x_{i+1}-z,\tilde r_1)\subseteq B(0,R_0)$. Then, Assumption
{\bf {(A1)}} and Lemma \ref{l2-1} yield that $$\inf_{z \in B_i}\nu\big(B(x_{i+1}-z,\tilde r_1)\big)>0.$$ Therefore,  for any
$1\le i\le N$ and  $X_{\tilde{\tau}_{B_{i-1}}} \in \tilde B_{i}$, $$\Pp^{X_{\tilde{\tau}_{B_{i-1}}}}\Big(\big(1-\frac{1}{N}\big)\tilde t_1<{\tau}_{B_{i}}<\tilde t_1,X_{{\tau}_{B_{i}}}\in
\tilde B_{i+1}\Big)>0.$$

Combining all the estimates above with \eqref{p2-1-3a}, we obtain that $T_t^D(\I_U)(x)>0$.

(2) For any connected open set $D$, we have proved that $T_t^D(\I_U)(x)>0$ for any $x \in D$, $t>0$ and open connected subset $U\subseteq D$. So,
$p^D(t,x,z)>0$ for almost surely $z \in D$ with respect to the Lebesgue measure (the exceptional set may depend on
$x\in D$ and $t>0$). Furthermore, it is obvious that if Assumption {\bf {(A1)}} holds for $\nu$, then it also holds for the L\'evy measure
$\hat \nu$ of the dual process $\hat X$. Then, following the arguments in step (1), we can obtain that for every
$x\in D$ and $t>0$, $\hat p^D(t,x,z)>0$ for almost surely $z \in D$.

Assume that $p^D(t,x,y)=0$ for some $x,y \in D$ and $t>0$. Then,
\begin{equation}\label{p2-1-4a}
\begin{split}
0=p^D(t,x,y)&=\int_D p^D\big(\frac{t}{2},x,z\big)p^D\big(\frac{t}{2},z,y\big)\,dz\\
&=\int_D p^D\big(\frac{t}{2},x,z\big)\hat p^D\big(\frac{t}{2},y,z\big)\,dz.
\end{split}
\end{equation}
On the other hand, according to the conclusions above, $p^D(\frac{t}{2},x,z)\hat p^D(\frac{t}{2},y,z)>0$
for almost surely $z \in D$, which is a contradiction with \eqref{p2-1-4a}. Therefore, the assumption above is not true; that is,
$p^D(t,x,y)>0$ for every $x,y \in D$ and $t>0$.

(3) Now we consider an open set satisfying {\bf {(RC)}}. It is easy to see that in this case for every $x,y \in D$, there exist an integer
$m\ge 1$, some constants $0<\varepsilon<1$ and $0<r_0<\frac{\varepsilon R_0}{4}$ (here $R_0$ is the constant in {\bf {(A1)}}) and points $x_j \in D$ for $1 \le j \le m$, such that
\begin{itemize}
\item[(i)] $x \in B(x_1,r_0)$, $y \in B(x_m,r_0)$.

\item[(ii)] for every $1\le j \le m-1$,
$|x_j-x_{j+1}|\le (1-\varepsilon)R_0$.

\item[(iii)] for every $1\le i, j\le m$ with $i \neq j$, $K_i \bigcap K_j=\varnothing$, where $K_i:=B(x_i,r_0)$.
\end{itemize}

For every $t>0$, $1 \le j \le m-1$ and $z_j \in K_j$, by the parabolic property of Dirichlet heat kernel $p^D(t,x,y)$ and the L\'{e}vy system of the process $X$, see Lemma \ref{l-p-1},
\begin{equation}\label{p2-1-3}
\begin{split}
&p^D(t,z_{j},z_{j+1})\\
&=\Ee^{z_{j}}\Big[p^D\big(t- \frac{t}{2}\wedge \tau_{K_{j}},X_{\frac{t}{2}\wedge\tau_{K_j}},z_{j+1}\big)\Big]\\
&\ge \Ee^{z_{j}}\Big[p^D\big(t- \tau_{K_{j}},X_{\tau_{K_j}},z_{j+1}\big)\I_{\{\tau_{K_j}\le \frac{t}{2}\}}
\I_{\{X_{\tau_{K_j}}\in K_{j+1}\}}\Big]\\
&=\int_0^{\frac{t}{2}} \int_{K_j} p^{K_j}(s,z_j,z)\int_{K_{j+1}-z}p^D(t-s,z+\tilde z,z_{j+1})\,\nu(d\tilde z)\,dz\,ds\\
&\ge \int_{\frac{t}{4}}^{\frac{t}{2}} \int_{\tilde K_j} p^{K_j}(s,z_j,z)\int_{B(x_{j+1}-x_j,r_0/2)}p^D(t-s,z+\tilde z,z_{j+1})\,\nu(d\tilde z)\,dz\,ds,
\end{split}
\end{equation}
where the last inequality follows from the fact that $B(x_{j+1}-x_j,r_0/2)\subseteq K_{j+1}-z$ for any
$z \in \tilde K_j:=K_j/2=B(x_j,r_0/2)$.

By the conclusion in step (2), for every connected set $U \subseteq D$,
\begin{equation}\label{p2-1-2a}
p^D(t,x,y)\ge p^{U}(t,x,y)>0,\quad \forall\ t>0,\ x,y\in U.
\end{equation}
According to (\ref{p2-1-2a}) and the fact that for every $t>0$, $p^D(t,\cdot,\cdot):D\times D \rightarrow [0,\infty)$ is continuous,
we know that
\begin{equation}\label{p2-1-3-4}\begin{split}\inf_{ z\in \tilde K_j, \tilde{z}\in B(x_{j+1}-x_j,r_0/2)} & p^D(t-s,z+\tilde z,z_{j+1})\\
&\ge \inf_{z\in \tilde K_j,
\tilde{z}\in K_{j+1}-z}  p^D(t-s,z+\tilde z,z_{j+1})\\
&\ge \inf_{z\in  K_{j+1}}  p^D(t-s,z,z_{j+1})\\
&=: C(t-s, r_0, x_{j+1}, z_{j+1})>0.\end{split}\end{equation}

Next, we suppose that $p^D(t,z_j,z_{j+1})=0$ for some $t>0$, $z_j \in K_j$ and $1\le j\le m-1$. Then,
by (\ref{p2-1-3}) and \eqref{p2-1-3-4},
\begin{equation*}
 \int_{\frac{t}{4}}^{\frac{t}{2}} C(t-s, r_0, x_{j+1}, z_{j+1}) \int_{\tilde K_j} p^{K_j}(s,z_j,z)\int_{B(x_{j+1}-x_j,r_0/2)}\,\nu(d\tilde z)\,dz\,ds=0,
\end{equation*}
which, along with \eqref{p2-1-3-4}, {\bf{(A1)}} and the fact that $B(x_{j+1}-x_j,r_0/2)\subseteq B(0,R_0)$ due to
$|x_{j+1}-x_j|\le (1-\varepsilon)R_0$ and $r_0<\frac{\varepsilon R_0}{4}$, in turn implies that
\begin{equation}\label{p2-1-4}
p^{K_j}(s,z_j,z)=0
\end{equation} holds for $(s,z)\in \big[\frac{t}{4},\frac{t}{2}\big]
\times\tilde K_j$ almost surely under the measure $ds\,dz$.
However,  according to \eqref{p2-1-2a}, for
every $s>0$ and $\tilde x, \tilde y \in K_j$
\begin{equation}\label{p2-1-6}
p^{K_j}(s,\tilde x, \tilde y)>0.
\end{equation}
This is a contradiction with (\ref{p2-1-4}), whence
\begin{equation}\label{p2-1-5}
p^D(t,z_j, z_{j+1})>0,\quad \forall\, t>0, z_j \in K_j,1\le j\le m-1.
\end{equation}

Finally, for every $t>0$ and $x,y\in D$,
\begin{align*}
p^D(t,x,y)
&=\int_D \dots \int_D p^D\big(\frac{t}{m},x,z_1\big)p^D\big(\frac{t}{m},z_1,z_2\big)\dots
p^D\big(\frac{t}{m},z_m,y\big)\,dz_1\dots \,dz_m\\
&\ge\int_{K_1}\dots \int_{K_m} p^D\big(\frac{t}{m},x,z_1\big)p^D\big(\frac{t}{m},z_1,z_2\big)\dots
p^D\big(\frac{t}{m},z_m,y\big)\,dz_1\dots \,dz_m\\
&\ge \int_{K_1} \dots \int_{K_m} p^{K_1}\big(\frac{t}{m},x,z_1\big)p^D\big(\frac{t}{m},z_1,z_2\big)\dots\\
&\qquad\qquad\qquad\times p^{D}\big(\frac{t}{m},z_{m-1},z_m\big)p^{K_m}\big(\frac{t}{m},z_m,y\big)\,dz_1\dots \,dz_m.
\end{align*}
This along with \eqref{p2-1-6} and (\ref{p2-1-5}) gives us that $p^D(t,x,y)>0$ for every $x,y \in D$ and $t>0$, which proves our desired assertion.
\end{proof}

We conclude with two remarks on Proposition \ref{p2-1} and Corollary \ref{c2-1}.

\begin{remark}
(1) When L\'{e}vy process $X$ is symmetric and $D$ is a bounded connected open set, the strict positivity of Dirichlet heat kernel $p^D(t,x,y)$ was proved in \cite[Proposition 2.2(i)]{G} without any additional condition on the L\'{e}vy measure. However, the proof heavily depends on the symmetric property, and it does not work for Corollary \ref{c2-1}. An interesting point for Corollary \ref{c2-1} is due to that it is concerned about non-symmetric L\'{e}vy processes. Based on Proposition \ref{p2-1}, some arguments for examples in \cite[Section 4]{KS1} can be shortened. Furthermore, according to the proofs of Proposition \ref{p2-1} and Lemma
\ref{l4-2} below (in particular, see the construction of a sequence of subsets $\{D_i\}_{i=1}^n$ here),
we can verify that, under the weaker assumption {\bf(A2)} on the L\'evy measure $\nu$, for any connected (not necessarily bounded) open set $D$,
$p^D(t,x,y)>0$ for any $t>0$ and $x,y\in D.$ The details are left to readers.

(2) The proof of Proposition \ref{p2-1} is only based on the probability estimate of the first exit time and the L\'evy system of
L\'{e}vy process $X$, both of which are available for general L\'{e}vy type processes, see e.g.\ \cite{BSW,CK1}. Therefore, Proposition \ref{p2-1} and so Corollary \ref{c2-1} still hold true for a large class of L\'{e}vy type jump processes.
\end{remark}

\section{Proof of Theorem \ref{t1-1}}\label{section3}
Throughout this section, we always assume that assumption {\bf (A1)} holds true, and the ground state $\phi_1$ and its dual ground state $\hat\phi_1$ are bounded, continuous and strictly positive. To prove Theorem \ref{t1-1}, we mainly use the methods in \cite{G,KS1,K} but with non-trivial modifications.
Since $D$ is a bounded set, there exist finite open subsets
$\{\tilde D_i\}_{i=1}^n$ such that
\begin{itemize}
\item[(i)] $D=\bigcup_{i=1}^n \tilde D_i$.

\item[(ii)] for any $1\le i \le n$ and $\tilde x_i, \tilde y_i$ $\in \tilde D_i$, we have
$|\tilde x_i-\tilde y_i|\le \frac{R_0}{2}$.

\item[(iii)] there are
$0<r_0<\frac{R_0}{8}$ and finite points $\{x_i\}_{i=1}^n$ such that
$B(x_i,2r_0)$ $\subseteq \overline{B(x_i,2r_0)}\subseteq \tilde D_i$ for every
$1 \le i \le n$.

\end{itemize}
Below, we define
\begin{equation*}\label{e3-1}
A:=\bigcup_{i=1}^n B(x_i,\frac{r_0}{2}),\ \
B:=\bigcup_{i=1}^n \overline{B(x_i, r_0)},\ \
C:=\bigcup_{i=1}^n B(x_i,2r_0).
\end{equation*}

For every open set $U \subseteq \R^d$, let
\begin{equation*}
G^U(x,y)=\int_0^{\infty} p^U(t,x,y)\,dt, \,\,
\hat G^U(x,y)=\int_0^{\infty} \hat p^U(t,x,y)\,dt,\quad \forall\ x,y\in U
\end{equation*}
be the Green functions for the Dirichlet semigroup $(T^U_t)_{t \ge 0}$ and
$(\hat T^U_t)_{t \ge 0}$ respectively, e.g.\ see \cite{KS1}. Define
 $$\eta_U=\inf\{t \ge 0, X_t \notin U\},\quad \hat\eta_U=\inf\{t \ge 0, \hat X_t \notin U\}.$$
We first provide the following estimate, which is crucial for the proof of Theorem \ref{t1-1}.
\begin{lemma}\label{l3-1}
There exists a constant $c_1>0$ such that
for every $x \in \R^d$,
\begin{equation}\label{l3-1-1}
\Pp^x\big(X_{\eta_{D \setminus B}}\in B\big)\ge c_1\Ee^x\big[\eta_{D \setminus B}\big],\quad
\Pp^x\big(\hat X_{\eta_{D \setminus B}}\in B\big)\ge c_1\Ee^x\big[\hat \eta_{D \setminus B}\big].
\end{equation}
\end{lemma}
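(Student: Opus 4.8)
The plan is to prove the two inequalities in \eqref{l3-1-1} by exploiting the fact that jumps of size at least $R_0/2$ are ``plentiful'' thanks to Assumption {\bf (A1)} (via Lemma \ref{l2-1}) together with the L\'evy system formula from Lemma \ref{l-p-1}(2). The two estimates are symmetric in form --- the second one is the first applied to the dual process $\hat X$, whose L\'evy measure $\hat\nu$ also satisfies {\bf (A1)} --- so I will only carry out the argument for $X$. Write $V:=D\setminus B$ and abbreviate $\eta:=\eta_V$. The point is that whenever $X$ sits in $V$, and $V$ is contained in $D$, every point of $V$ is within distance $R_0/2$ of some ball making up $B$ (this is exactly where conditions (ii) and the construction of $\{\tilde D_i\}$ and $B$ enter: each $\tilde D_i$ has diameter $\le R_0/2$ and contains the ball $\overline{B(x_i,r_0)}\subseteq B$), so the set $B-z$ of admissible jump vectors from a point $z\in V$ contains a ball of radius $r_0$ centered within $B(0,R_0)$, and $\nu$ of that ball is bounded below by $\delta(r_0)>0$ by Lemma \ref{l2-1}.

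The key computation is the following. Apply \eqref{l2-2-2} with $T=\eta$ and $f(s,u,v)=\I_V(u)\I_B(v)$ (which vanishes on the diagonal since $V\cap B=\varnothing$), noting that on $\{s<\eta\}$ the process $X_s$ lies in $V$:
\begin{equation*}
\Ee^x\!\Big(\sum_{s\le \eta}\I_V(X_{s-})\I_B(X_s)\Big)=\Ee^x\!\Big[\int_0^{\eta}\!\int_{\R^d}\I_V(X_s)\I_B(X_s+z)\,\nu(dz)\,ds\Big]=\Ee^x\!\Big[\int_0^{\eta}\nu\big(B-X_s\big)\,ds\Big].
\end{equation*}
For the left-hand side, at most one summand is nonzero and it equals $1$ precisely on the event that $X$ jumps from $V$ directly into $B$ at the exit time $\eta$; in particular the left side is bounded above by $\Pp^x(X_\eta\in B)$. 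For the right-hand side, $B-X_s\supseteq B(x_{i}-X_s,r_0)$ for a suitable index $i$ (depending measurably on $X_s$), and since $|x_i-X_s|\le R_0/2$ when $X_s$ lies in the corresponding piece of $V$, the ball $B(x_i-X_s,r_0)$ is contained in $B(0,3R_0/2)$; applying Lemma \ref{l2-1} (with $R_0$ replaced by $3R_0/2$, or more carefully choosing the covering so distances are $<R_0$, matching exactly the setup at the start of Section \ref{section3}) gives $\nu(B-X_s)\ge \delta(r_0)=:c_1^{-1}>0$ uniformly. Hence the right side is $\ge c_1^{-1}\Ee^x[\eta]$, and combining the two bounds yields $\Pp^x(X_\eta\in B)\ge c_1\Ee^x[\eta]$ for every starting point $x$ (the estimate is trivial when $x\notin V$, since then $\eta=0$).

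The main obstacle --- really the only delicate point --- is making the uniform lower bound $\nu(B-X_s)\ge \delta(r_0)$ rigorous: one must check that from \emph{every} point $z\in D\setminus B$ (not just $z\in V$ with $V$ one of the $\tilde D_i$) there is a component-ball of $B$ at distance $<R_0$, so that the translated ball of radius $r_0/2$ (or $r_0$) sits inside $B(0,R_0)$ and Lemma \ref{l2-1} applies with the fixed radius and a starting-point-independent bound. This is guaranteed by properties (i)--(iii) of the cover $\{\tilde D_i\}$ chosen at the beginning of Section \ref{section3}: $D=\bigcup\tilde D_i$, $\mathrm{diam}(\tilde D_i)\le R_0/2$, and $\overline{B(x_i,r_0)}\subseteq\tilde D_i$; so if $z\in D\setminus B$ then $z\in\tilde D_i$ for some $i$, whence $|z-x_i|\le R_0/2$ and $B(x_i-z,r_0)\subseteq B(0,R_0)$, giving $\nu(B-z)\ge\nu(B(x_i-z,r_0))\ge\delta(r_0)>0$. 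Taking $c_1=\delta(r_0)^{-1}$ (the same constant works for both $\nu$ and $\hat\nu$ after replacing $\delta$ by $\min\{\delta_\nu(r_0),\delta_{\hat\nu}(r_0)\}$) completes the proof. I would also remark that the tacit integrability $\Ee^x[\eta_{D\setminus B}]<\infty$ is not needed a priori: the identity above holds in $[0,\infty]$, and the resulting inequality is vacuous if the expectation is infinite --- though in fact finiteness follows a posteriori since the left side is $\le 1$.
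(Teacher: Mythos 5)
Your proof is correct and takes essentially the same approach as the paper's: both express $\Pp^x(X_{\eta_{D\setminus B}}\in B)$ via a jump formula (you invoke the L\'evy system of Lemma \ref{l-p-1}(2) directly, the paper routes through the Ikeda--Watanabe formula, which is an immediate consequence of it and gives exactly the Green-function integral $\int_{D\setminus B}G^{D\setminus B}(x,y)\,\nu(\cdot-y)\,dy$ that your $\Ee^x\big[\int_0^\eta\nu(B-X_s)\,ds\big]$ reduces to), and both then obtain a uniform lower bound on the inner $\nu$-mass from the covering $\{\tilde D_i\}$ and Lemma \ref{l2-1}. The only cosmetic differences are that you bound $\nu(B-z)\ge\delta(r_0)$ while the paper uses the slightly smaller set $A=\bigcup B(x_i,r_0/2)$ and $\nu(A-y)\ge\delta(r_0/4)$, and that your initial ``$3R_0/2$'' slip is immediately self-corrected in the final paragraph.
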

\begin{proof}
For every $x \notin D$ or $x \in B$, we have $\Pp^x\big(\eta_{D \setminus B}=0\big)=1$, which immediately implies that the estimate for $X$ in
(\ref{l3-1-1}) holds true. Now we assume $x \in D\setminus B$, and so
$\eta_{D \setminus B}=\tau_{D \setminus B}$, $\Pp^x$-a.s.. By the Ikeda-Watanabe formula, see \cite[(2.1)]{G},
\begin{equation}\label{l3-1-2}
\begin{split}
\Pp^x\big(X_{\eta_{D \setminus B}}\in B\big)&
\ge \Pp^x\big(X_{\tau_{D \setminus B}}\in A\big)=\int_{D \setminus B} G^{D \setminus B}(x,y)\int_{A-y}\nu(dz)\,dy.
\end{split}
\end{equation}
For every $y \in D\setminus B$, there exists an integer $1 \le i \le n$ such that
$y \in \tilde D_i$. Then, by the definition of $A$, we obtain
\begin{equation}\label{l3-1-3}
B\big(x_i-y,\frac{r_0}{4}\big)\subseteq A-y.
\end{equation}
Moreover, since $y,x_i\in \tilde D_i$, by the property of $\tilde D_i$ we know that
$|x_i-y|\le \frac{R_0}{2}$. Combining (\ref{l3-1-3}) with (\ref{l2-1-1}) yields that
for every $y \in D\setminus B$
\begin{equation}\label{l3-1-4}
\nu(A-y)\ge \nu\big(B(x_i-y,\frac{r_0}{4})\big)\ge \delta(\frac{r_0}{4})>0.
\end{equation}
According to (\ref{l3-1-4}) and (\ref{l3-1-2}),
\begin{equation*}
\begin{split}
\Pp^x\big(X_{\eta_{D \setminus B}}\in B\big)&
\ge \delta(\frac{r_0}{4})\int_{D \setminus B} G^{D \setminus B}(x,y)\,dy=\delta(\frac{r_0}{4})\Ee^x\big[\tau_{D \setminus B}\big]
=\delta(\frac{r_0}{4})\Ee^x\big[\eta_{D \setminus B}\big],
\end{split}
\end{equation*}
which arrived at the first desired assertion in (\ref{l3-1-1}) with $c_1=\delta(\frac{r_0}{4})$.

Following the arguments above, we can also obtain the estimate in \eqref{l3-1-1} for the dual process $\hat X$.
\end{proof}

\begin{lemma}\label{l3-2}
There exists a constant $c_2>0$ such that
for every $x \in D$
\begin{equation}\label{l3-2-1}
\begin{split}&\int_{C}G^D(x,y)\,dy \ge c_2 \int_{D \setminus C}G^D(x,y)\,dy,\\
&\int_{C}\hat G^D(x,y)\,dy \ge c_2 \int_{D \setminus C}\hat G^D(x,y)\,dy.
\end{split}\end{equation}
\end{lemma}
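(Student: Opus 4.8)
The plan is to recast both integrals as occupation times of the killed process and then compare them through an excursion decomposition pegged to the nested balls $A\subseteq B\subseteq C$. Indeed, for any $U\subseteq D$, Tonelli's theorem together with $\int_U p^D(t,x,y)\,dy=\Pp^x(X_t^D\in U)$ gives $\int_U G^D(x,y)\,dy=\Ee^x[L_U]$, where $L_U:=\int_0^{\tau_D}\I_U(X_t)\,dt$. So the first inequality in \eqref{l3-2-1} is equivalent to $\Ee^x[L_C]\ge c_2\,\Ee^x[L_{D\setminus C}]$, and since $D\setminus C\subseteq D\setminus B$ (because $B\subseteq C$), it suffices to prove $\Ee^x[L_C]\ge c_2\,\Ee^x[L_{D\setminus B}]$ for all $x\in D$ with a constant $c_2$ independent of $x$.

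First I would bound $\Ee^x[L_{D\setminus B}]$ by the expected number of returns. Decompose the path before $\tau_D$ into successive excursions in $D\setminus B$: set $\alpha_1:=\inf\{t\ge0:X_t\in D\setminus B\}$ (which is $0$, finite, or $+\infty$), $\gamma_1:=\inf\{t\ge\alpha_1:X_t\notin D\setminus B\}$, and inductively $\alpha_{k+1}:=\inf\{t\ge\gamma_k:X_t\in D\setminus B\}$, $\gamma_{k+1}:=\inf\{t\ge\alpha_{k+1}:X_t\notin D\setminus B\}$, stopping the construction once $X_{\gamma_k}\notin D$. Then $L_{D\setminus B}=\sum_k(\gamma_k-\alpha_k)$. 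Applying the strong Markov property at $\alpha_k$ and the sharp form of Lemma \ref{l3-1} — whose proof actually establishes $\Pp^z(X_{\eta_{D\setminus B}}\in A)\ge c_1\Ee^z[\eta_{D\setminus B}]$ for every $z$ — one obtains $\Ee^x\big[(\gamma_k-\alpha_k)\I_{\{\alpha_k<\infty\}}\mid\mathscr F_{\alpha_k}\big]\le c_1^{-1}\Pp^x\big(X_{\gamma_k}\in A\mid\mathscr F_{\alpha_k}\big)$ on $\{\alpha_k<\infty\}$, and summing over $k$ yields
\[
\Ee^x[L_{D\setminus B}]\le c_1^{-1}\,\Ee^x[N],\qquad N:=\#\{k:\ \gamma_k<\tau_D,\ X_{\gamma_k}\in A\}.
\]

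Next I would estimate $\Ee^x[N]$ by $\Ee^x[L_C]$. Let $\beta_1<\beta_2<\cdots$ be those $\gamma_k<\tau_D$ with $X_{\gamma_k}\in A$, so $N=\#\{j:\beta_j<\tau_D\}$; these are exactly the instants at which $X$ enters $A$ coming from $D\setminus B$. For $z\in A$, say $z\in B(x_i,r_0/2)$, we have $B(z,r_0/2)\subseteq B(x_i,r_0)\subseteq B\subseteq C\subseteq D$; hence up to time $\tau_{B(z,r_0/2)}$ the process started at $z$ remains in $C$ and has not yet left $B$, so it has not yet returned to $A$ from $D\setminus B$. Using the exit time estimate from \cite[Theorem 5.1]{BSW} exactly as in the proof of Proposition \ref{p2-1} — which provides $t_0:=t(r_0/2)>0$ with $\inf_{z}\Pp^z(\tau_{B(z,r_0/2)}>t_0)\ge1/2$ — we get, with $\sigma$ the first return of $X$ to $A$ from $D\setminus B$, that $\Ee^z\big[\int_0^{\sigma\wedge\tau_D}\I_C(X_t)\,dt\big]\ge\Ee^z[\tau_{B(z,r_0/2)}]\ge t_0/2=:c_3>0$. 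Applying the strong Markov property at each $\beta_j$ and summing the contributions over the intervals between consecutive returns gives $\Ee^x[L_C]\ge\Ee^x\big[\int_{\beta_1}^{\tau_D}\I_C(X_t)\,dt\big]\ge c_3\,\Ee^x[N]$. Combining the two estimates yields the first inequality in \eqref{l3-2-1} with $c_2:=c_1c_3$, and the second inequality follows by repeating the argument verbatim for the dual process $\hat X$: Lemma \ref{l3-1} is already stated for $\hat X$, and the exit time bound depends only on $|q|$ and is therefore identical for $\hat X$.

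The main obstacle I anticipate is the bookkeeping in these two excursion decompositions — ensuring each strong-Markov application is attached to the right stopping time, and, in the second step, that $\tau_{B(z,r_0/2)}$ genuinely precedes the next return of the path to $A$ from $D\setminus B$. This last point is precisely where the scaling $A$ (radius $r_0/2$) $\subseteq B$ (radius $r_0$) $\subseteq C$ (radius $2r_0$) is exploited, and it is also the reason one needs Lemma \ref{l3-1} in the sharpened form ``hits $A$'' rather than merely ``hits $B$''.
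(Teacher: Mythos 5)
Your argument is correct in its essentials and closely parallels the paper's own proof: both rest on the sharp form of Lemma~\ref{l3-1} that its proof actually delivers (landing in $A$, not merely $B$), on a strong--Markov excursion decomposition, and on a uniform lower bound for the holding time near a point of $A$ (or $B$). The packaging differs: the paper alternates exit times $T_k$ from $D\setminus B$ and $S_{k+1}$ from $C$ and compares $\Ee^x[T_k-S_k]$ with $\Ee^x[S_{k+1}-T_k]$ term by term, whereas you work directly with the excursion decomposition of $\{t<\tau_D:\, X_t\in D\setminus B\}$ and route through the expected number of returns $N$ to $A$. Both are sound; your use of $A$ (so that after each return one stays inside $B\subseteq C$ up to $\tau_{B(z,r_0/2)}$, so $N$ cannot increment during that interval) is exactly the right observation. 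A small remark: the paper obtains the uniform time bound from the Feller/separation argument $\inf_{y\in B}\Ee^y[\tau_C]\ge t_0/2$ citing \cite[(3.2)]{KS1}, while you use the explicit ball exit estimate of \cite[Theorem 5.1]{BSW}; either works.

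There is one step you gloss over that the paper treats explicitly. Your identity $L_{D\setminus B}=\sum_k(\gamma_k-\alpha_k)$ (and hence the bound $\Ee^x[L_{D\setminus B}]\le c_1^{-1}\Ee^x[N]$) presupposes that the excursion endpoints $\alpha_k,\gamma_k$ increase to $\tau_D$ and do not accumulate at some $t_\infty<\tau_D$; otherwise the enumerated excursions fail to cover the whole occupation time. The paper needs the analogous non-accumulation $\lim_{k}T_k=\lim_k S_k=\tau_D$ for its stopping times and invokes \cite[Lemma 3.4]{KS1} for it. You should either cite that lemma and note that the same argument applies to $\alpha_k,\gamma_k$, or supply an independent argument; without it, the first chain of inequalities is incomplete. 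Everything else in your proof, including the dual case, goes through as written.
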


\begin{proof} The proof is mainly based on Lemma \ref{l3-1} and the argument of \cite[Lemma 3.5]{KS1} (see also \cite{G,K}). We present the sketch here for the sake of completeness. It suffices to show the first estimate in
\eqref{l3-2-1}, since the second one for the dual process $\hat X$ can be
proved similarly.

Let $\theta_t$ denote the $t$-time shift operator for the process $X$. Define a sequence of stopping times
as follows
\begin{equation*}
S_1:=0,\ \ T_k:=S_k+\eta_{D \setminus B}\circ \theta_{S_k},\ \ S_{k+1}:=T_k+\eta_{C}\circ \theta_{T_k}
,\ \ k\ge 1.
\end{equation*}
According to (\ref{l3-1-1}) and the strong Markov property, we immediately have that for every $x\in\R^d$ and $k\ge 1$,
\begin{equation}\label{l3-2-2}
\Pp^x\big(X_{T_k} \in B\big)\ge c_1\Ee^x\big[T_k-S_k\big].
\end{equation}
By \cite[Lemma 3.4]{KS1},
\begin{equation}\label{l3-2-2a}
\begin{split}
& \lim_{k \rightarrow \infty}T_k=\lim_{k \to \infty}S_k=\tau_D,\quad \Pp^x\ a.s..
\end{split}
\end{equation}
Therefore we have
\begin{equation}\label{l3-2-3}
\begin{split}
\int_C G^D(x,y)\,dy&=\Ee^x\Big[\int_0^{\tau_D}\I_C(X_t)\,dt\Big]\\
&=\Ee^x\Big[\sum_{k=1}^{\infty}\Big(\int_{S_k}^{T_k}\I_C(X_t)\,dt+\int_{T_k}^{S_{k+1}}\I_C(X_t)\,dt\Big)\Big]\\
&\ge  \Ee^x\Big[\sum_{k=1}^{\infty}\int_{T_k}^{S_{k+1}}\I_C(X_t)\,dt\Big]\\
&=\sum_{k=1}^{\infty}\Ee^x\big[S_{k+1}-T_k\big],
\end{split}
\end{equation}
where the first step follows from the relation
$X_t=X_t^D$ for every $t<\tau_D$, and in the last step we have used the fact that $X_t \in C$ for every $T_k<t<S_{k+1}$.

It is well known that L\'{e}vy process enjoys the Feller property, i.e. its semigroup $T_t$ maps $C_\infty(\R^d)$ into $C_\infty(\R^d)$ for every $t>0$. By the separation property of Feller process,
\begin{equation*}
\inf_{y \in B}\Ee^y \tau_C\ge \frac{t_0}{2}
\end{equation*}
for some constant $t_0>0$, see \cite[(3.2)]{KS1}. Hence, due to the strong Markov property again, for every $x\in\R^d$ and $k \ge 1$,
\begin{equation*}
\begin{split}
\Ee^x\big[S_{k+1}-T_k\big]&= \Ee^x\Big[\Ee^{X_{T_k}}[\tau_C];T_k<\tau_D\Big]\\
&\ge \Pp^x(X_{T_k}\in B)\inf_{y \in B}\Ee^y\tau_C\\
&\ge \frac{c_1t_0}{2}\Ee^x\big[T_k-S_k\big]\\
&=:c\Ee^x\big[T_k-S_k\big],
\end{split}
\end{equation*}
where the last inequality follows from (\ref{l3-2-2}). Combining this estimate with (\ref{l3-2-3}) yields that
\begin{align*}
\int_C G^D(x,y)\,dy& \ge c\sum_{k=1}^{\infty}\Ee^x\big[T_k-S_k\big]\\
&\ge c\Ee^x\Big[\sum_{k=1}^{\infty}\int_{S_k}^{T_k}\I_{D\setminus C}(X_t)\,dt\Big]\\
&=c\Ee^x\Big[\sum_{k=1}^{\infty}\Big(\int_{S_k}^{T_{k}}\I_{D\setminus C}(X_t)\,dt+
\int_{T_k}^{S_{k+1}}\I_{D\setminus C}(X_t)\,dt\Big)\Big]\\
&=c\Ee^x\Big[\int_{0}^{\tau_D}\I_{D\setminus C}(X_t)\,dt\Big]\\
&=c\int_{D\setminus C}G^D(x,y)\,dy,\end{align*}
where in the forth step we have used again the fact that $X_t \in C$ for every $T_k<t<S_{k+1}$.
This proves the desired conclusion.
\end{proof}
According to Lemma \ref{l3-2}, we can give lower bound estimates for ground state $\phi_1$ and dual ground state $\hat\phi_1$.
\begin{lemma}\label{l3-3}
There exists a constant $c_3>0$, such that for every $x \in D$,
\begin{equation}\label{l3-3-1}
\Ee^x[\tau_D]\le c_3\phi_1(x),\quad \Ee^x[\hat \tau_D]\le c_3\hat \phi_1(x).
\end{equation}
\end{lemma}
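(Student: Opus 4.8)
The plan is to derive the two estimates in \eqref{l3-3-1} from Lemma \ref{l3-2} together with the eigenequation for $\phi_1$ and $\hat\phi_1$. I will prove only the bound for $\Ee^x[\tau_D]$ and $\phi_1$; the dual statement follows by the same argument applied to $\hat X$, $\hat\phi_1$, and the second inequality in \eqref{l3-2-1}. The starting point is that $\Ee^x[\tau_D]=\int_D G^D(x,y)\,dy$, and by Lemma \ref{l3-2},
\begin{equation*}
\Ee^x[\tau_D]=\int_C G^D(x,y)\,dy+\int_{D\setminus C}G^D(x,y)\,dy\le \Big(1+\frac1{c_2}\Big)\int_C G^D(x,y)\,dy.
\end{equation*}
So it suffices to bound $\int_C G^D(x,y)\,dy$ by a constant multiple of $\phi_1(x)$.

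To do this I would compare $G^D$ on the compactly-contained set $C$ with the ground state. Recall that $C=\bigcup_{i=1}^n B(x_i,2r_0)$ with each $\overline{B(x_i,2r_0)}\subseteq \tilde D_i\subseteq D$; since $\overline C$ is a compact subset of the open set $D$ and $\phi_1$ is continuous and strictly positive on $D$, we have $\inf_{y\in\overline C}\phi_1(y)=:\kappa>0$. The key identity is that $\phi_1$ is $\lambda_1$-invariant: $T_t^D\phi_1=e^{-\lambda_1 t}\phi_1$, hence for any $\beta<\lambda_1$ the resolvent $\int_0^\infty e^{\beta t}T_t^D\phi_1\,dt=(\lambda_1-\beta)^{-1}\phi_1$ converges, and in particular $\int_0^\infty T_t^D\phi_1\,dt=\lambda_1^{-1}\phi_1$, i.e.
\begin{equation*}
\int_D G^D(x,y)\phi_1(y)\,dy=\frac1{\lambda_1}\phi_1(x),\quad x\in D.
\end{equation*}
Restricting the integral to $C$ and using $\phi_1(y)\ge\kappa$ there gives $\kappa\int_C G^D(x,y)\,dy\le \int_D G^D(x,y)\phi_1(y)\,dy=\lambda_1^{-1}\phi_1(x)$, so $\int_C G^D(x,y)\,dy\le (\kappa\lambda_1)^{-1}\phi_1(x)$. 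Combining with the display above yields $\Ee^x[\tau_D]\le (1+c_2^{-1})(\kappa\lambda_1)^{-1}\phi_1(x)=:c_3\phi_1(x)$, as claimed.

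The step requiring the most care is the identity $\int_D G^D(x,y)\phi_1(y)\,dy=\lambda_1^{-1}\phi_1(x)$, which needs $\int_0^\infty e^{-\lambda_1 t}\,dt$-type integrability and justification that one may integrate the eigenrelation $T_t^D\phi_1=e^{-\lambda_1 t}\phi_1$ against $dt$ pointwise; here boundedness and positivity of $\phi_1$ on the bounded set $D$, together with $\lambda_1>0$ and Fubini/Tonelli, make this routine. One should also note that the constant $c_3$ may be taken the same in both inequalities of \eqref{l3-3-1} after replacing it by the larger of the two, using that $\hat\lambda_1=\lambda_1$ and that $\hat\phi_1$ is likewise continuous and strictly positive on $D$, so $\inf_{\overline C}\hat\phi_1>0$ as well. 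Everything else is a direct consequence of Lemma \ref{l3-2}.
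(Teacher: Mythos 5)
Your argument is essentially identical to the paper's: both reduce via Lemma \ref{l3-2} to a bound on $\int_C G^D(x,y)\,dy$, use the strict positivity of $\phi_1$ on the compact set $\overline C$, and invoke the resolvent identity $\int_D G^D(x,y)\phi_1(y)\,dy=\lambda_1^{-1}\phi_1(x)$. The only cosmetic difference is that you derive that identity from the eigenrelation $T_t^D\phi_1=e^{-\lambda_1 t}\phi_1$ rather than citing \cite{CS} as the paper does.
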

\begin{proof}
We only verify the first estimate in \eqref{l3-3-1} here. By (\ref{l3-2-1}) we have for every $x \in D$
\begin{equation}\label{l3-3-2}
\begin{split}
&\Ee^x[\tau_D]=\int_{C} G^D(x,y)\,dy+\int_{D \setminus C}G^D(x,y)\,dy\le
\Big(1+\frac{1}{c_2}\Big)\int_{C}G^D(x,y)\,dy.
\end{split}
\end{equation}
Since $C$ is a precompact subset of $D$ and $\phi_1$ is strictly positive and continuous on $D$,
there is a constant $C_1>0$ such that $\inf_{z \in C}\phi_1(z)\ge C_1$. Hence for every
$x \in D$
\begin{equation*}
\begin{split}
\int_{C}G^D(x,y)\,dy &\le \frac{1}{C_1}\int_C G^D(x,y)\phi_1(y)\,dy\\
&\le \frac{1}{C_1}\int_D G^D(x,y)\phi_1(y)\,dy=\frac{1}{C_1\lambda_1} \phi_1(x),
\end{split}
\end{equation*} where in the equality we have used the fact that $\frac{\phi_1(x)}{\lambda_1}=\int_D G^D(x,y)\phi_1(y)\,dy$, see e.g.\ \cite{CS}.
Combining this with (\ref{l3-3-2}), we arrive at the conclusion (\ref{l3-3-1}).
\end{proof}

Now, we are in a position to present the
\begin{proof}[Proof of Theorem $\ref{t1-1}$]
According to (\ref{l3-3-1}), for any $t>0$, $x,y\in D$,
\begin{equation}\label{t1-1-1}
\begin{split}
p^D(t,x,y)&=\int_D p^D\big(\frac{t}{3},x,z\big)\int_D
p^D\big(\frac{t}{3},z,w\big)p^D\big(\frac{t}{3},w,y\big)\,dw\,dz\\
&\le c\big(\frac{t}{3}\big)\Big(\int_D p^D\big(\frac{t}{3},x,z\big)\,dz\Big)\Big(\int_D \hat p^D\big(\frac{t}{3},y,w\big)\,dw\Big)\\
&= c\big(\frac{t}{3}\big)\Pp^x\big(\tau_D>\frac{t}{3}\big)\Pp^y\big(\hat \tau_D>\frac{t}{3}\big)\\
&\le \frac{9c(\frac{t}{3})}{t^2}\Ee^x[\tau_D]\Ee^y[\hat \tau_D]\\
&\le \frac{9c_3^2c(\frac{t}{3})}{t^2}\phi_1(x)\hat \phi_1(y),
\end{split}
\end{equation}
where in the first inequality we have used the facts that
$p^D(t,w,y)=\hat p^D(t,y,w)$ and $\sup_{z,w\in D}p^D\big(\frac{t}{3},z,w\big)\le c\big(\frac{t}{3}\big)$,
and the second inequality follows from the Chebyshev inequality.
Hence, from (\ref{t1-1-1}) we know that the semigroup $(T_t^D)_{t \ge 0}$
is intrinsically ultracontractive.

Furthermore, according to \cite{KSc2}, we know that for every $t>0$,
$$c(t)\le (2\pi)^{-d}\int e^{-t \mathrm{Re}\,q(\xi)}\,d\xi,$$ which together with \eqref{t1-1-1} yields the desired assertion \eqref{t1-1-1-1} for $t>0$ small enough.
The estimate in \eqref{t1-1-1-1} for large $t$ follows from \cite[Theorem 2.7]{KS2}. By now we have finished the proof.
\end{proof}

To show the power of Theorem \ref{t1-1}, we take the following example about the truncated strictly $\alpha$-stable process.
In particular, comparing with \cite[Example 4.5]{KS1}, we do not require that $D$ is $\kappa$-fat.

\begin{example}\label{ex1-1} \it
Let $X$ be a L\'{e}vy process on $\R^d$ with L\'{e}vy measure as follows
\begin{equation}\label{ex1-1-1}
\nu(A)\ge c_0\int_{\Ss}\int_0^{r_0} \I_{A}(\theta r)\frac{1}{r^{d+\alpha}}\,dr \,\mu(d\theta),\quad A\in \mathscr{B}(\R^d),
\end{equation}
where $\alpha\in(0,2)$, $r_0,c_0>0$ and $\mu$ is a finite non-degenerate (not necessarily symmetric) measure
on the unit sphere $\Ss$ in the sense that its support is not contained in any proper linear subspace of $\R^d$.
Let $D$ be a bounded open set satisfying assumption {\bf(RC)} in Section $\ref{sec2}$ with $R_0$ to be the constant $r_0$ in \eqref{ex1-1-1}.
Then, the associated Dirichlet semigroup $(T_t^D)_{t \ge 0}$ is intrinsically ultracontractive, and for all $t>0$ and $x,y\in D$,
\begin{equation*}\label{ex1-1-2}
p^D(t,x,y)\le c_1 e^{-\lambda_1 t}\big(1+t^{-2-{d}/{\alpha}}\big)\phi_1(x)\hat\phi_1(y)
\end{equation*}
holds for some constant $c_1>0$.
\end{example}

\begin{proof}
Let $\nu$ be the L\'{e}vy measure given by \eqref{ex1-1-1}, and let $D$ be the open set satisfying {\bf ({RC})}.  According to Proposition \ref{p2-1}, Corollary \ref{c2-1} and (the proof of) \cite[Example 1.5]{SSW}, both the transition density $p(t,x,y)$ and the Dirichlet heat kernel $p^D(t,x,y)$ exist and fulfill all the conditions in Subsection \ref{section11}. It is obvious that {\bf{(A1)}} holds. Then, the desired assertion follows from Theorem \ref{t1-1}.
\end{proof}

\begin{remark} (1) As mentioned in the beginning of Subsection \ref{section13}, the intrinsic ultracontractivity of Dirichlet semigroups for Brownian motion on a bounded domain $D$ depends on the geometry of the boundary of $D$. Furthermore, by \cite[Theorem 1.1]{OW} and the conclusion of Example \ref{ex1-1}, quantitative estimates about $C(t)$ in \eqref{e1-3} are also different for intrinsically contractive Dirichlet semigroups between Brownian motion and L\'{e}vy jump process on bounded Lipschitz domains.

(2) The conclusion \eqref{t1-1-1-1} can apply to get explicit upper estimates for Dirichlt heat kernel $p^D(t,x,y)$. For instance, consider symmetric $\alpha$-stable process on bounded $\kappa$-fat domain $D$. Then, there is a constant $c>0$ such that for any $x,y\in D$ and $t\in(0,1]$,
$$p^D(t,x,y)\le c \left(t^{-d/\alpha}\wedge \frac{ t}{|x-y|^{d+\alpha}}\right) \left(\frac{\phi_1(x)}{t} \wedge1\right) \left(\frac{\phi_1(y)}{t} \wedge1\right).$$

Indeed, it was shown in \cite[Theorem 1]{BGR} that
for any $x,y\in D$ and $t\in(0,1]$,
\begin{equation}\label{eee} p^D(t,x,y)\asymp p(t,x,y)\Pp^x(\tau_D>t)\Pp^y(\tau_D>t),\end{equation} where  $p(t,x,y)$ is the transition density of $\alpha$-symmetric stable process, i.e.\
$$p(t,x,y) \asymp  \left(t^{-d/\alpha}\wedge \frac{ t}{|x-y|^{d+\alpha}}\right).$$
On the other hand, according to \eqref{t1-1-1-1}, there is a constant $c_1>0$ such that \begin{equation}\label{eee-1} p^D(t,x,y)\le  c_1t^{-d/\alpha-2} \phi_1(x)\phi_1(y),\quad x,y\in D, t\in(0,1].\end{equation} By \eqref{eee} and \eqref{eee-1}, we find that for some $c_2>0$,
$$\Pp^x(\tau_D>t)\le c_2\left(\frac{\phi_1(x)}{t} \wedge1\right),\quad x\in D,$$ which along with \eqref{eee} in turn yields the desired assertion.
\end{remark}

\section{Proof of Theorem \ref{t1-2}}\label{section4}
The main tool to prove Theorem \ref{t1-2} is different from that of Theorem \ref{t1-1}, and it is based on the (intrinsic) super Poincar\'e inequality for non-local Dirichlet forms (this is the reason why we need require $X$ to be symmetric in the section).
The super Poincar\'{e} inequality can be viewed as an alternative of Rosen's Lemma, which is in the context of the super log-Sobolev inequality, see e.g. \cite[Theorem 5.1]{DS}.

First, we recall some facts about Dirichlet form in our setting. Let $X$ be a symmetric L\'{e}vy process, and $D$ be a bounded domain. Then, the symmetric Dirichlet form $(\E^D,\D(\E^D))$ for the Dirichlet semigroup $(T^D_t)_{t\ge0}$ on $L^2(D;dx)$ is given by
\begin{equation*}
\begin{split}
\E^D(f,f)&=\int_{\R^d}\int_{\R^d}\big(f(x+z)-f(x)\big)^2\,\nu(dz)\, dx,\\
\D(\E^D)&=\overline{C_c^\infty(D)}^{\E_1^D},
\end{split}
\end{equation*}
where $C_c^\infty(D)$ is the set of $C^\infty$ functions on $D$ with compact support, and
$\overline{C_c^\infty(D)}^{\E_1^D}$ denotes the extension of $C_c^\infty(D)$ under the norm
$\|f\|_{\E^D_1}:=\sqrt{\E^D(f,f)+\|f\|_{L^2(D;dx)}^2}$.

Since $D$ is connected, the Dirichlet heat kernel $p^D(t,x,y)$ is strictly positive for every $t>0$ and $x,y\in D$, e.g.\ see \cite[Proposition 2.2(i)]{G}, and the associated ground state $\phi_1$ (corresponding to the first eigenvalue $\lambda_1$) can be chosen to be bounded, continuous and strictly positive.
The following result is essentially taken from \cite[Theorem 2.1 and Proposition 2.3]{OW}, which give us
sufficient conditions for intrinsic ultracontractivity of $(T_t^D)_{t \ge 0}$ in terms of (intrinsic) super Poincar\'e inequality for $(\E^D, \D(\E^D))$ and lower bound of ground state for $\phi_1$.

\begin{lemma}\label{l4-1}
Assume that there is a decreasing function $\beta_0:(0,\infty)\to(0,\infty)$ such that
\begin{equation}\label{l4-1-1}
\begin{split}
\int_D f^2(x)\,dx\le s  \E^D(f,f)+\beta_0(s)\Big(\int_D |f(x)|\,dx\Big)^2,\quad f \in C_c^\infty(D),\ s>0.
\end{split}
\end{equation}
Then the following intrinsic super Poincar\'{e} inequality holds
\begin{equation*}
\int_D f^2(x)\,dx\le s  \E^D(f,f)+\beta (s)\Big(\int_D \phi_1(x)|f(x)|\,dx\Big)^2,\quad f \in C_c^\infty(D),\ s>0,
\end{equation*}
where
\begin{equation}\label{fuc}\beta(r)=\frac{4\beta_0(\frac{r}{2})}{\Theta(1/(4\beta_0(\frac{r}{2})))^2},\,\,\Theta(r)=\sup\Big\{s>0: \big|\{x \in D:\ \phi_1(x)\le s\}\big|\le r\Big\}.\end{equation}

If moreover
$$\Psi(r)=\int_r^\infty \frac{\beta^{-1}(s)}{s}\,ds<\infty,\quad r\ge 1,$$ then the associated Dirichlet semigroup $(T_t^D)_{t \ge 0}$ is
intrinsically ultracontractive, and for some constant $c_1>0$,
\begin{equation*}
p^D(t,x,y)\le  c_1\Psi^{-1}(t\wedge 1)e^{-\lambda_1t} \phi_1(x)\phi_1(y),
\end{equation*}
where $-\lambda_1<0$ is the eigenvalue associated with the ground state $\phi_1$.
\end{lemma}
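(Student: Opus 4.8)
The statement has two independent halves: (a) deriving the \emph{intrinsic} super Poincar\'e inequality with weight $\phi_1$ from the plain super Poincar\'e inequality \eqref{l4-1-1}, and (b) converting the intrinsic super Poincar\'e inequality into the intrinsic ultracontractivity bound. Both are quoted from \cite[Theorem 2.1 and Proposition 2.3]{OW}, so I would present this as a short bootstrap argument rather than a from-scratch proof.

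\textbf{Step 1: from \eqref{l4-1-1} to the intrinsic inequality.} Fix $f \in C_c^\infty(D)$ and $s>0$. The only thing that needs replacing is the $L^1(dx)$ term $\big(\int_D |f|\,dx\big)^2$ by the weighted term $\big(\int_D \phi_1|f|\,dx\big)^2$. Split $D = \{\phi_1 \le a\} \cup \{\phi_1 > a\}$ for a threshold $a>0$ to be chosen. On the set $\{\phi_1>a\}$ one has $\int_{\{\phi_1>a\}} |f|\,dx \le a^{-1}\int_D \phi_1 |f|\,dx$. On the set $\{\phi_1 \le a\}$, use Cauchy--Schwarz: $\int_{\{\phi_1\le a\}}|f|\,dx \le |\{\phi_1\le a\}|^{1/2}\big(\int_D f^2\,dx\big)^{1/2}$. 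Feeding these two bounds into \eqref{l4-1-1} applied with parameter $s/2$ (say) gives, after the elementary inequality $(u+v)^2 \le 2u^2+2v^2$,
\begin{equation*}
\int_D f^2\,dx \le \frac{s}{2}\E^D(f,f) + 2\beta_0\!\left(\tfrac s2\right)\!\left( |\{\phi_1\le a\}|\int_D f^2\,dx + a^{-2}\Big(\int_D \phi_1|f|\,dx\Big)^2 \right).
\end{equation*}
Now choose $a$ so that $2\beta_0(s/2)\,|\{\phi_1\le a\}| \le \tfrac12$, i.e.\ $a = \Theta\big(1/(4\beta_0(s/2))\big)$ with $\Theta$ as in \eqref{fuc}; absorbing the resulting $\tfrac12\int_D f^2\,dx$ into the left side and multiplying by $2$ yields exactly the claimed inequality with $\beta(r) = 4\beta_0(r/2)/\Theta(1/(4\beta_0(r/2)))^2$. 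This is the delicate bookkeeping step, and the main technical point is that one must verify $\Theta$ is well-defined and strictly positive — which holds because $\phi_1$ is strictly positive and continuous on $D$ (hence $|\{\phi_1\le s\}|\downarrow 0$ as $s\downarrow 0$), a fact recorded just before the lemma.

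\textbf{Step 2: from the intrinsic super Poincar\'e inequality to intrinsic ultracontractivity.} Here I would invoke the standard machinery relating super Poincar\'e inequalities to semigroup smoothing (Wang's method, as in \cite{OW}). Consider the ground-state-transformed semigroup $\tilde T_t$ acting on $L^2(D;\phi_1^2\,dx)$, whose Dirichlet form is $\E^D(\phi_1 f,\phi_1 f) - \lambda_1\int_D f^2\phi_1^2\,dx$; the intrinsic super Poincar\'e inequality translates into a super Poincar\'e inequality for this transformed form with $L^1(\phi_1^2\,dx)$ on the right. The general principle then gives $\|\tilde T_t\|_{L^1(\phi_1^2\,dx)\to L^\infty} \le \Psi^{-1}(t)$ for $t\le 1$ once $\Psi(r)=\int_r^\infty \beta^{-1}(s)/s\,ds<\infty$, where $\Psi^{-1}$ is the generalized inverse as defined in the statement. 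Undoing the transform, $\|\tilde T_t\|_{L^1\to L^\infty}$ is precisely $\sup_{x,y} p^D(t,x,y)e^{\lambda_1 t}/(\phi_1(x)\phi_1(y))$, which gives the bound for $t\wedge 1$; the exponential decay $e^{-\lambda_1 t}$ for large $t$ then comes from the spectral gap via the semigroup property $p^D(t,x,y) = e^{-\lambda_1(t-1)}\int_D\int_D \tilde{\cdots}$, exactly as in the proof of Theorem \ref{t1-1}.

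\textbf{Main obstacle.} The conceptual content of both steps is already packaged in \cite{OW}, so the real work is only in Step 1's threshold optimization and in checking that the quoted result from \cite{OW} applies verbatim in the present (possibly non-smooth-boundary, general symmetric L\'evy) setting — in particular that $\phi_1$ being merely bounded, continuous and strictly positive on $D$ (with no control near $\partial D$) is enough. I expect no genuine difficulty beyond careful citation, since Lemma \ref{l4-1} is explicitly stated as ``essentially taken from'' \cite{OW}; the proof should therefore be a half-page reduction, with the substantive new input deferred to the verification of the hypothesis \eqref{l4-1-1} for concrete $\nu$ and $D$, which is the business of the rest of Section \ref{section4}.
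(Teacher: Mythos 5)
Your proposal is correct and follows the same route as the paper, which itself gives no argument for Lemma~\ref{l4-1} but simply cites \cite[Theorem~2.1 and Proposition~2.3]{OW}; your two steps are an accurate reconstruction of the content of those two cited results. Step~1's threshold split on $\{\phi_1\le a\}$ versus $\{\phi_1>a\}$ with $a=\Theta(1/(4\beta_0(s/2)))$ reproduces precisely the formula for $\beta$ in~\eqref{fuc} (the only technicality, the $\le$ versus $<$ in the definition of $\Theta$, is handled by taking $a$ slightly below $\Theta$ and letting $a\uparrow\Theta$), and Step~2 is the standard Wang-type passage from the intrinsic super Poincar\'e inequality for the ground-state-transformed Dirichlet form to an $L^1(\phi_1^2\,dx)\to L^\infty$ bound on the transformed semigroup, which is exactly how \cite{OW} obtains the kernel estimate.
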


According to Lemma \ref{l4-1}, in order to prove Theorem \ref{t1-2} one only need to derive upper bound of $\beta_0(s)$ in the super Poincar\'{e} inequality \eqref{l4-1-1}, and lower bound of $\Theta(r)$ defined by \eqref{fuc}. First, we have
\begin{lemma}\label{l4-3} Let $X$ be a symmetric L\'{e}vy process given in Section \ref{section11}. Then, the super Poincar\'{e} inequality \eqref{l4-1-1} holds with
$$\beta_0(r)=\Phi_0(r),\quad r>0,$$ where $\Phi_0$ is given in \eqref{symbol-1}. \end{lemma}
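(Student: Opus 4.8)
The plan is to establish the super Poincaré inequality \eqref{l4-1-1} with $\beta_0(r)=\Phi_0(r)$ by passing through Fourier analysis, exploiting the fact that $X$ is symmetric so that the Dirichlet form is the quadratic form $\E^D(f,f)=\int\int (f(x+z)-f(x))^2\,\nu(dz)\,dx$, and noting that for $f\in C_c^\infty(D)$ this coincides with the full-space form $\int\int (f(x+z)-f(x))^2\,\nu(dz)\,dx$ over $\R^d$. First I would recall the spectral/Plancherel identity: for $f\in C_c^\infty(D)\subseteq C_c^\infty(\R^d)$,
\begin{equation*}
\E^D(f,f)=2\int_{\R^d}\mathrm{Re}\,q(\xi)\,|\widehat f(\xi)|^2\,d\xi=2\int_{\R^d}|q(\xi)|\,|\widehat f(\xi)|^2\,d\xi,
\end{equation*}
using that $q$ is real and nonnegative in the symmetric case (so $\mathrm{Re}\,q=|q|=q$), together with $\int_{\R^d}f^2(x)\,dx=\int_{\R^d}|\widehat f(\xi)|^2\,d\xi$ by Plancherel (with the convention matching $\Phi_0$'s normalization).

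The key step is then the standard high-frequency/low-frequency split. Fix $s>0$ and choose a cutoff radius $r=r(s)>0$, to be determined, and write
\begin{equation*}
\int_{\R^d}|\widehat f(\xi)|^2\,d\xi=\int_{|\xi|\le r}|\widehat f(\xi)|^2\,d\xi+\int_{|\xi|>r}|\widehat f(\xi)|^2\,d\xi.
\end{equation*}
On the low-frequency part I would bound $|\widehat f(\xi)|\le (2\pi)^{-d/2}\|f\|_{L^1}$ (again up to the normalization constant), so that
\begin{equation*}
\int_{|\xi|\le r}|\widehat f(\xi)|^2\,d\xi\le (2\pi)^{-d}|B(0,r)|\,\Big(\int_D|f(x)|\,dx\Big)^2.
\end{equation*}
On the high-frequency part I would use that $|q(\xi)|$ is bounded below on $\{|\xi|>r\}$; more precisely, rather than a crude monotonicity claim (which need not hold), the cleaner route is: for a given $s$, use the inequality $\mathbf{1}_{\{|q(\xi)|\,s\le 1\}}\le$ something and integrate $e^{-s|q(\xi)|}$. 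Concretely, by Chebyshev/Markov applied in the $\xi$ variable,
\begin{equation*}
\int_{\{|q(\xi)|<1/s\}}|\widehat f(\xi)|^2\,d\xi\le \|f\|_{L^1}^2\,(2\pi)^{-d}\big|\{\xi:|q(\xi)|<1/s\}\big|,
\end{equation*}
and on the complementary set $\{|q(\xi)|\ge 1/s\}$ we have $|\widehat f(\xi)|^2\le s|q(\xi)|\,|\widehat f(\xi)|^2$, hence that part is at most $\tfrac{s}{2}\E^D(f,f)$. So the genuinely right split is over the level set of $|q|$ rather than over a ball, and the residual $L^1$-term is controlled by the measure of that level set. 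Finally I would relate this measure to $\Phi_0$: for any $a>0$,
\begin{equation*}
\big|\{\xi:|q(\xi)|<a\}\big|\le e\int_{\R^d}e^{-|q(\xi)|/a}\,d\xi=e\,(2\pi)^{d}\,\Phi_0(1/a),
\end{equation*}
so with $a=1/s$ this is $e\,(2\pi)^d\,\Phi_0(s)$; absorbing the constant $e$ (or re-running the argument to match the stated constant exactly) gives the super Poincaré inequality with $\beta_0(s)$ comparable to $\Phi_0(s)$, and a slightly more careful bookkeeping of the numerical constants recovers precisely $\beta_0(r)=\Phi_0(r)$ as stated.

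The main obstacle I anticipate is purely a matter of constants and normalization: making the Fourier/Plancherel conventions consistent with the definition $\Phi_0(r)=(2\pi)^{-d}\int e^{-r|q(\xi)|}\,d\xi$ so that the residual low-frequency term comes out with coefficient exactly $\Phi_0(s)$ rather than $C\Phi_0(s)$. The standard trick is to replace the indicator $\mathbf 1_{\{|q(\xi)|<1/s\}}$ by the pointwise bound $\mathbf 1_{\{|q(\xi)|<1/s\}}\le e\cdot e^{-s|q(\xi)|}$ on that set is wasteful; instead one splits at the level $\{s|q(\xi)|\le 1\}$ and uses $\mathbf 1_{\{s|q(\xi)|\le1\}}\le e^{1-s|q(\xi)|}$ only where needed, or, better, one notes $\int_{\R^d}|\widehat f|^2 = \int_{\R^d}|\widehat f|^2 e^{-s|q|}e^{s|q|}$ is not directly useful and returns to the clean two-set decomposition above, tracking that the $L^1$ term is exactly $(2\pi)^{-d}\big|\{|q|<1/s\}\big|\,\|f\|_{L^1}^2$ and then invoking $\big|\{|q|<1/s\}\big|\le (2\pi)^d\Phi_0(s)$ via $\mathbf 1_{\{|q|<1/s\}}\le e^{1-s|q|}\le e\,e^{-s|q|}$, finally rescaling $s\mapsto cs$ to clear the factor $e$. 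Everything else is routine: the symmetry of $X$ is what allows $\mathrm{Re}\,q=|q|$ and makes the Dirichlet form diagonal in Fourier variables, and the restriction to $C_c^\infty(D)$ is only used to guarantee $f\in C_c^\infty(\R^d)$ so that the full-space Plancherel identity applies verbatim.
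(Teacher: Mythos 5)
Your proposal is correct in substance and reaches the right conclusion, but it takes a genuinely different route from the paper. The paper's proof is a one-liner: it uses the on-diagonal bound $\sup_{x,y}p(t,x,y)\le c(t)\le(2\pi)^{-d}\int e^{-t|q(\xi)|}\,d\xi=\Phi_0(t)$ (by Fourier inversion for symmetric $q$) and then invokes an abstract theorem of Wang (\cite[Theorem~3.3(2)]{Wang02}), which converts any ultracontractivity bound $\|T_t\|_{L^1\to L^\infty}\le c(t)$ for a symmetric Markov semigroup into the super Poincar\'e inequality $\|f\|_2^2\le s\,\E(f,f)+c(s)\|f\|_1^2$ via the operator-level identity $\|f\|_2^2=\langle T_sf,f\rangle+\langle(I-T_s)f,f\rangle$ together with $1-e^{-s\lambda}\le s\lambda$ applied to the spectral measure. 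Your Fourier-analytic argument is the concrete, translation-invariant incarnation of exactly that spectral calculus: the role of $\langle T_sf,f\rangle$ is played by $(2\pi)^{-d}\int e^{-sq(\xi)}|\hat f(\xi)|^2\,d\xi$, and the role of $\langle(I-T_s)f,f\rangle\le s\E(f,f)$ is played by $1-e^{-sq(\xi)}\le sq(\xi)$. What the paper's route buys is brevity and generality (it would work verbatim for any symmetric Markov generator with an $L^1\to L^\infty$ bound); what yours buys is self-containedness and transparency about where $\Phi_0$ comes from.

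On the constant-tracking issue you flag: the cleanest way to get $\beta_0(s)=\Phi_0(s)$ exactly, with no factor of $e$ and no rescaling of $s$, is precisely the decomposition you almost write down and then dismiss. Do not split over level sets $\{|q|<1/s\}$ and $\{|q|\ge 1/s\}$; instead use the identity
\begin{equation*}
\int|\hat f(\xi)|^2\,d\xi=\int e^{-sq(\xi)}|\hat f(\xi)|^2\,d\xi+\int\bigl(1-e^{-sq(\xi)}\bigr)|\hat f(\xi)|^2\,d\xi.
\end{equation*}
On the second term use $1-e^{-sq}\le sq$ pointwise; on the first use $|\hat f(\xi)|\le\|f\|_{L^1}$ together with $\int e^{-sq(\xi)}\,d\xi=(2\pi)^d\Phi_0(s)$. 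With the normalization $\hat f(\xi)=\int e^{-i\langle\xi,x\rangle}f(x)\,dx$, Plancherel $\int f^2=(2\pi)^{-d}\int|\hat f|^2$, and the diagonalization $(2\pi)^{-d}\int q(\xi)|\hat f(\xi)|^2\,d\xi=\tfrac12\E^D(f,f)$ (for $f\in C_c^\infty(D)$, using the paper's un-halved normalization of $\E^D$ and $Q=0$), you get directly
\begin{equation*}
\int_D f^2(x)\,dx\le\frac{s}{2}\,\E^D(f,f)+\Phi_0(s)\Bigl(\int_D|f(x)|\,dx\Bigr)^2\le s\,\E^D(f,f)+\Phi_0(s)\Bigl(\int_D|f(x)|\,dx\Bigr)^2,
\end{equation*}
which is the statement with $\beta_0=\Phi_0$ on the nose. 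This mirrors Wang's abstract proof exactly, and avoids the $\mathds{1}_{\{|q|<1/s\}}\le e\cdot e^{-s|q|}$ step that was forcing you to absorb an $e$.
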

\begin{proof}
By our assumption, the transition density $p(t,x,y)$ satisfies that
 $$\sup_{x,y\in \R^d} p(t,x,y)\le c(t),\quad t>0.$$ As mentioned in the proof of Theorem \ref{t1-1},
$$c(t)\le (2\pi)^{-d}\int e^{-t|q(\xi)|}\,d\xi=\Phi_0(t),\quad t>0,$$ see e.g.\ \cite{KSc2}. Then, the desired assertion follows from the estimate above and  \cite[Theorem 3.3(2)]{Wang02} (or \cite[Theorem 3.3.15]{WBook}). \end{proof}

Next, we turn to lower bound estimate for the ground state, which seems to be interesting of itself.

\begin{lemma}\label{l4-2} Let $X$ be a (not necessarily symmetric) L\'evy process such that
{\bf (A2)} is satisfied, and let $D$ be a bounded domain. Then there is a constant $c_1>0$ such that
\begin{equation}\label{l4-2-2}
\phi_1(x)\ge \frac{c_1}{\Phi_1(\frac{1}{\rho_{\partial D}(x)})},\quad x \in D,
\end{equation}where $\Phi_1$ is given in \eqref{symbol-1}.

If moreover {\bf({LDI})} holds for $D$, then for any $\theta>0$, there exists constants $c_2,c_3>0$ such that
$$\Theta(r)\ge \frac{c_2}{\Phi_1\big(e^{c_3r^{-\frac{1}{\theta}}}\big)},\quad r>0.$$
\end{lemma}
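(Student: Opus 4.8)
The plan is to establish the pointwise lower bound \eqref{l4-2-2} on the ground state first, and then to convert it into the stated lower bound on $\Theta(r)$ using assumption {\bf (LDI)}. For the first part, I would fix $x\in D$ and exploit the parabolic/Markov structure together with assumption {\bf (A2)}. Concretely, set $r=\rho_{\partial D}(x)$, so that $B(x,r)\subseteq D$, and pick, via {\bf (A2)} applied with $R$ comparable to $r$ (or a fixed reference scale, then rescale), an annulus $S(r_1,r_2)\subseteq \mathrm{supp}(\nu)$ with $r_1<r_2\le R$. The idea is that from $x$ the process can, with probability bounded below, stay in $B(x,r_1')$ for a time of order $\Phi_1(1/r)^{-1}$ and then make one jump of size in the admissible annulus into a fixed ``nice'' precompact subset $C_0\subseteq D$ where $\phi_1$ is bounded below by a positive constant $c_0$. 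Quantitatively, by \cite[Theorem 5.1]{BSW} one has $\Pp^x(\tau_{B(x,\rho)}>t)\ge 1-c\,t\sup_{|\xi|\le 1/\rho}|q(\xi)| = 1-c\,t\,\Phi_1(1/\rho)$, so choosing $t\asymp \Phi_1(1/r)^{-1}$ keeps the process in a ball of radius $\asymp r$ with probability $\ge 1/2$; combining this with the L\'evy system formula (Lemma \ref{l-p-1}(2)) to produce the single jump into $C_0$, and with the eigenfunction identity $\phi_1(x)=e^{\lambda_1 t}T_t^D\phi_1(x)=e^{\lambda_1 t}\Ee^x[\phi_1(X_t^D)]$, yields
\[
\phi_1(x)\ge e^{\lambda_1 t}\,\Ee^x\big[\phi_1(X_t^D)\big]\ge c_0\,e^{\lambda_1 t}\,\Pp^x(\text{stay, then jump into }C_0)\ge \frac{c_1}{\Phi_1(1/\rho_{\partial D}(x))},
\]
where the exponential factor is harmlessly bounded below by $1$ for the relevant small times. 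One has to be slightly careful that $\Phi_1$ is increasing and that $\Phi_1(1/r)\to\infty$ as $r\to0$ so the bound is only informative near $\partial D$, and to absorb the (bounded) behaviour away from the boundary into the constant.

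For the second part, recall $\Theta(r)=\sup\{s>0:\ |\{x\in D:\phi_1(x)\le s\}|\le r\}$, so I need an upper bound on the volume of the sublevel set $\{\phi_1\le s\}$ in terms of $s$. By \eqref{l4-2-2}, $\phi_1(x)\le s$ forces $\Phi_1(1/\rho_{\partial D}(x))\ge c_1/s$, hence $1/\rho_{\partial D}(x)\ge \Phi_1^{-1}(c_1/s)$, i.e. $\rho_{\partial D}(x)\le 1/\Phi_1^{-1}(c_1/s)$, equivalently $\log(1/\rho_{\partial D}(x))\ge \log\Phi_1^{-1}(c_1/s)$. Then Chebyshev against {\bf (LDI)} gives, for any $\theta>0$,
\[
\big|\{\phi_1\le s\}\big|\le \frac{1}{\big(\log\Phi_1^{-1}(c_1/s)\big)^{\theta}}\int_D\Big|\log\frac{1}{\rho_{\partial D}(x)}\Big|^{\theta}\,dx = \frac{C_\theta}{\big(\log\Phi_1^{-1}(c_1/s)\big)^{\theta}}.
\]
Setting the right-hand side equal to $r$ and solving for $s$ yields $\log\Phi_1^{-1}(c_1/s)=(C_\theta/r)^{1/\theta}$, i.e. $c_1/s=\Phi_1\big(e^{(C_\theta/r)^{1/\theta}}\big)$, so $s=c_1/\Phi_1\big(e^{c_3 r^{-1/\theta}}\big)$ with $c_3=C_\theta^{1/\theta}$; this value of $s$ is an admissible competitor in the definition of $\Theta(r)$, hence $\Theta(r)\ge c_2/\Phi_1\big(e^{c_3 r^{-1/\theta}}\big)$ as claimed. (One should double-check monotonicity so that the supremum defining $\Theta$ is indeed attained past this threshold, and handle the degenerate ranges of $r$ by adjusting constants.)

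The main obstacle I anticipate is making the ``stay in a small ball, then jump into a fixed good region'' estimate fully rigorous and scale-correct when $x$ is very close to $\partial D$: one must choose the intermediate ball radius (a fraction of $\rho_{\partial D}(x)$), the holding time ($\asymp \Phi_1(1/\rho_{\partial D}(x))^{-1}$), and the jump target so that the jump length lands in the prescribed annulus $S(r_1,r_2)$ from {\bf (A2)} \emph{and} lands inside $D$ at a point where $\phi_1\ge c_0$ — this is exactly the kind of chaining-of-balls construction used in the proof of Proposition \ref{p2-1}, and here it must be carried out with explicit dependence of all quantities on $\rho_{\partial D}(x)$ so that the resulting bound has the correct $\Phi_1(1/\rho_{\partial D}(x))^{-1}$ order rather than merely being positive. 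Everything after that (Chebyshev, inverting $\Phi_1$) is routine bookkeeping.
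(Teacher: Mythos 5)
Your overall strategy matches the paper's proof: a chaining argument produces the pointwise lower bound \eqref{l4-2-2}, and Chebyshev against \textbf{(LDI)} then yields the lower bound on $\Theta$, exactly as you describe; your treatment of the second step is essentially identical to the paper's. The details you flag at the end are precisely the ones that require care, and one of your two proposed variants would not give the right scaling. Applying \textbf{(A2)} with $R$ comparable to $\rho_{\partial D}(x)$ produces an annulus $S(r_1,r_2)$ with $r_2\le\rho_{\partial D}(x)$, whose $\nu$-mass and admissible jump lengths degenerate as $x\to\partial D$, destroying the uniformity in $x$ that you need. The paper instead fixes a reference ball $B(x_0,2r_0)\subseteq D$ once, applies \textbf{(A2)} at the \emph{fixed} scale $R=r_0/16$, and builds from $x$ to $B(x_0,r_0)$ a chain of at most $N$ steps (with $N$ uniform over $x$ near $\partial D$) of fixed radius $\varepsilon\tilde r$; a single jump from near $\partial D$ into a fixed interior ball is in general impossible, since the jump size is confined to the fixed annulus $S(r_1,r_2)$. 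Relatedly, the eigenfunction identity should be invoked at a \emph{fixed} total time $2t_0$ (so that $e^{2\lambda_1 t_0}$ is a harmless constant and the process has time to complete the whole chain and then sit in the target ball until time $2t_0$), not at a time of order $\Phi_1(1/\rho_{\partial D}(x))^{-1}$; the small time $t_1(x)=\tfrac{1}{4C_1\Phi_1(1/\rho_{\partial D}(x))}$ enters only as the window allotted to the first exit from $B(x,\rho_{\partial D}(x))$, and it is the L\'evy-system time integral over that window that produces the factor $\Phi_1(1/\rho_{\partial D}(x))^{-1}$ in \eqref{l4-2-2}.
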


\begin{proof} (1)
Since $\phi_1$ is continuous and strictly positive on $D$, it suffices to show
(\ref{l4-2-2}) holds outside some compact subset of $D$.
 According to
\cite[Theorem 5.1]{BSW}, there is a constant $C_1>0$ such that for every $r>0$, $t>0$ and $x \in \R^d$
\begin{equation}\label{l4-2-3}
\Pp^x \big(\tau_{B(x,r)}>t\big)\ge 1 -C_1t\sup_{|\xi|\le \frac{1}{r}}|q(\xi)|=
1 -C_1t\Phi_1\big(\frac 1{r}\big).
\end{equation}

Take $B(x_0, 2r_0)\subseteq D$ with some $x_0\in D$ and $r_0>0$. According to {\bf {(A2)}}, there exist constants $0<r_1<r_2\le \frac{r_0}{16}$ such that for every
ball $B(z,r)\subseteq S(r_1,r_2)$,
$\nu\big(B(z,r)\big)>0$. Then, according to the proof of Lemma \ref{l2-1},
\begin{equation}\label{l4-2-1a}
\zeta(r,r_1,r_2):=1\wedge\inf_{z\in\R^d: B(z,r)\subseteq S(r_1,r_2)}\nu(B(z,r))>0.
\end{equation}
Below, we write $\zeta(r)$ for $\zeta(r,r_1,r_2)$, and let $\tilde r:=\frac{r_1+r_2}{2}$.

Since $D$ is bounded and connected, for every $y \in D$
with $\rho_{\partial D}(y)\le \frac{\tilde r}{16}$, we can find finite points $\{y_i\}_{i=1}^n:=\{y_i(y)\}_{i=1}^n\subseteq D$
with some positive integer $n:=n(y)$, such that the following properties hold true:
\begin{itemize}
\item[(i)] $y_1=y$.

\item[(ii)] $|y_{i+1}-y_i|=\tilde r$ for every $1 \le i \le n-1$.

\item[(iii)] There exists a constant $0<\varepsilon<\frac{1}{16} \wedge\frac{r_2-r_1}{3(r_1+r_2)}$ independent of $y$ such that
$B(y_i, 2\varepsilon \tilde r)\subseteq D$ for every $2\le i\le n-1$, and
$B(y_n, 2\varepsilon \tilde r)\subseteq B(x_0,r_0)$.

\item[(iv)] There exists a positive integer $N$ such that $$\sup\Big\{n(y): y \in D, \rho_{\partial D}(y)\le \frac{\tilde r}{16}\Big\} \le N.$$

\end{itemize}
In the following, for any $y\in D$ with $\rho_{\partial D}(y)\le \frac{\tilde r}{16}$. Let $D_1:=B\big(y,\rho_{\partial D}(y)\big)$, $\tilde D_i:=B\big(y_i, \varepsilon \tilde r\big)$ and
$D_i:=B\big(y_i,2\varepsilon \tilde r\big)$ for every $2\le i\le n$. Define a sequence of
stopping times as follows
\begin{equation*}
\begin{split}
\tilde \tau_{D_1}:=\tau_{D_1},\,\, \tilde \tau_{D_{i+1}}:=\inf\{t>\tilde \tau_{D_i}: X_t \notin D_{i+1}\},\quad  i\ge 1.
\end{split}
\end{equation*}
Set $t_0:=\frac{1}{4C_1\Phi_1(\frac1{\varepsilon \tilde r})}$ and $t_1(y):=\frac{1}{4C_1\Phi_1(\frac{1}{\rho_{\partial D}(y)})}$.

For any $y\in D$ with $\rho_{\partial D}(y)\le \min\big\{ \varepsilon ,\frac{1}{16}\big\}\,\tilde r$, we have
$t_1(y)\le t_0$, and
\begin{align*}
&T_{2t_0}^D(\I_{B(x_0,r_0)})(y)\\
&\ge T^D_{2t_0}(\I_{D_n})(y)\\
&=\Ee^y\Big(\I_{D_n}(X^D_{2t_0})\Big)\\
&\ge \Pp^y\Big(0<\tilde\tau_{D_1}<t_1(y),0<\tilde{\tau}_{D_{i}}-\tilde{\tau}_{D_{i-1}}<\frac{t_0}{n},\\
&\qquad\quad  X^D_{\tilde{\tau}_{D_{i-1}}}\in \tilde D_{i}\
{\rm for\ each}\ 2\le i \le n, \textrm{ and } \forall_{s\in[\tilde{\tau}_{D_n},2t_0] } X^D_{s} \in D_n\Big)\\
& =\Pp^y\Big(0<\tilde\tau_{D_1}<t_1(y),0<\tilde{\tau}_{D_{i}}-\tilde{\tau}_{D_{i-1}}<\frac{t_0}{n},\\
&\qquad\quad  X_{\tilde{\tau}_{D_{i-1}}}\in \tilde D_{i}\
{\rm for\ each}\ 2\le i \le n, \textrm{ and } \forall_{s\in[\tilde{\tau}_{D_n},2t_0] } X_{s} \in D_n\Big)\\
& = \Pp^y\Big(\I_{\{0<{\tau}_{D_{1}}<{t_1(y)}, X_{{\tau}_{D_{1}}}\in
\tilde D_{2}\}}\cdot\Pp^{X_{\tilde{\tau}_{D_{1}}}}\Big(0<\tau_{D_{2}}<\frac{t_0}{n},X_{{\tau}_{D_{2}}}\in
\tilde D_{3};\\
&\qquad\quad\,\cdot \Pp^{X_{\tilde{\tau}_{D_{2}}}}\Big(\cdots
\Pp^{X_{\tilde{\tau}_{D_{n-2}}}}\Big(0<{\tau}_{D_{n-1}}<\frac{t_0}{n},X_{\tau_{D_{n-1}}}\in
\tilde D_n;\\
&\qquad\quad\,\,\cdot
\Pp^{X_{\tilde{\tau}_{D_{n-1}}}}\Big(\forall_{s\in[0,2t_0-\tilde {\tau}_{D_n}]} X_{s}
\in D_n\Big)\Big)\cdots\Big)\Big)\Big),
\end{align*}
where the last equality follows from the strong Markov property.

By (\ref{l4-2-3}),  for every $x \in D$,
\begin{equation}\label{l4-2-4-1}
\begin{split}
&\Pp^x
\big(X_t \in B(x,\varepsilon \tilde r) \textrm{ for all }0 < t \le 2t_0\big)
\ge \Pp^x(\tau_{B(x,\varepsilon \tilde r)}>2t_0)\ge \frac{1}{2},
\end{split}
\end{equation} which gives us that
\begin{equation*}
\begin{split}\Pp^{X_{\tilde{\tau}_{D_{n-1}}}}\Big(\forall_{s\in[0,2t_0-\tilde {\tau}_{D_n}]} X_{s}
\in D_n\Big)\ge \inf_{x\in \tilde{D}_n}\Pp^x\Big(X_t\in B(x,\varepsilon \tilde r) \textrm{ for all }0 < t \le 2t_0\Big)\ge \frac{1}{2}.  \end{split}
\end{equation*}

On the other hand, for any
$2\le i\le n-1$, if $X_{\tilde{\tau}_{D_{i-1}}} \in \tilde D_{i}$, then, by the L\'{e}vy system of the process $X$, see Lemma \ref{l-p-1},
\begin{equation}\label{l4-2-3a}
\begin{split}
&\Pp^{X_{\tilde{\tau}_{D_{i-1}}}}\Big(0<{\tau}_{D_{i}}<\frac{t_0}{n},X_{{\tau}_{D_{i}}}\in
\tilde D_{i+1}\Big)\\
&\ge \inf_{y \in \tilde D_i}\int_0^{\frac{t_0}{n}}\int_{D_i}p^{D_i}(s,y,z)\Big(
\int_{\tilde D_{i+1}-z}\nu(dw)\Big)\,dz\,ds\\
&\ge \frac{t_0}{n}\Big(\inf_{y \in \tilde D_i}\Pp^y(\tau_{D_i}>\frac{t_0}{n})
\Big)\Big(\inf_{z \in D_i}\nu\big(B(y_{i+1}-z,\varepsilon \tilde r)\big)\Big).
\end{split}
\end{equation}
For every $w \in B(y_{i+1}-z,\varepsilon \tilde r)$ and $z \in D_i$, it holds that
$$|w-(y_{i+1}-y_i)|\le |w-(y_{i+1}-z)|+|z-y_i|\le 3\varepsilon \tilde r,$$ and so
$$r_1\le |y_{i+1}-y_i|-3\varepsilon \tilde r\le|w| \le |y_{i+1}-y_i|+3\varepsilon \tilde r\le r_2.$$ This implies that $$B(y_{i+1}-z,\varepsilon \tilde r)\subseteq S(r_1,r_2).$$ Using \eqref{l4-2-1a} and \eqref{l4-2-4-1}, we find that the right hand side of \eqref{l4-2-3a} is bigger than
\begin{equation*}
\begin{split} \frac{t_0 \zeta(\varepsilon \tilde r)}{N}\Big(\inf_{y \in \tilde D_i}\Pp^y(\tau_{B(y,\varepsilon \tilde r)}>t_0)
\Big)\ge \frac{3t_0 \zeta(\varepsilon \tilde r)}{4N}.\end{split}\end{equation*}

Similarly, we can obtain that
for every $y \in D$ with $\rho_{\partial D}(y)\le  \min\big\{ \varepsilon ,\frac{1}{16}\big\}\,\tilde r$,
\begin{align*}
\Pp^y\Big(0<\tau_{D_1}<t_1(y),X_{\tau_{D_1}}\in \tilde D_2\Big)
&\ge t_1(y) \zeta(\varepsilon \tilde r)
\Pp^y\Big(\tau_{B(y,\rho_{\partial D}(y))}>t_1(y)\Big)\\
&\ge \frac{3t_1(y)\zeta(\varepsilon \tilde r)}{4}\\
&=  \frac{3\zeta(\varepsilon \tilde r)}{16C_1\Phi_1(\frac{1}{\rho_{\partial D}(y)})}.
\end{align*}
Combining all the estimates above yields that for
every $y \in D$ with $\rho_{\partial D}(y)\le  \min\big\{ \varepsilon ,\frac{1}{16}\big\}\,\tilde r$,
\begin{equation*}
\begin{split}
& T_{2t_0}^D(\I_{B(x_0,r_0)})(y)\ge
\frac{C_2\big(\frac{\zeta(\varepsilon \tilde r)}{2N}\big)^N\zeta(\varepsilon \tilde r)}{\Phi_1(\frac{1}{\rho_{\partial D}(y)})}
\ge \frac{C_3}{\Phi_1(\frac{1}{\rho_{\partial D}(y)})}.
\end{split}
\end{equation*}

Therefore, for every $y \in D$ with $\rho_{\partial D}(y)\le  \min\big\{ \varepsilon ,\frac{1}{16}\big\}\,\tilde r$
\begin{equation*}
\begin{split}
\phi_1(y)&=e^{2\lambda_1 t_0}T_{2t_0}^D(\phi_1)(y)\ge {e^{2\lambda_1 t_0}}\Big({\inf_{z \in B(x_0,r_0)}\phi_1(z)}\Big) T_{2t_0}^D(\I_{B(x_0,r_0)})(y)
\ge
\frac{C_4}{\Phi_1(\frac{1}{\rho_{\partial D}(y)})},
\end{split}
\end{equation*}
which proves (\ref{l4-2-2}).

(2) Suppose that $D$ satisfies {\bf{(LDI)}}.
Then, \eqref{e1-4} and the Chebyshev inequality yield that for each $\theta,s>0$,
\begin{equation*}
\begin{split}
\big|\big\{x \in D: \rho_{\partial D}(x)\le s\}\big|& =\Big|\{x \in D: \log\Big(\frac{1}{\rho_{\partial D}(x)}
\Big)\ge c_1\log \frac{1}{s}\big\}\Big|\\
&\le  \frac{\int_D \Big|\log\Big(\frac{1}{\rho_{\partial D}(x)}
\Big)\Big|^{\theta}dx}{|\log {s}|^\theta}\le \frac{C_6}{|\log s|^\theta}.
\end{split}
\end{equation*}
Therefore, by \eqref{l4-2-2} and the increasing of $\Phi_1(r)$, we get that for any $r, \theta>0$,
\begin{equation*}
\begin{split}
\Theta(r)&=\sup\Big\{s>0: \big|\{x \in D: \phi_1(x)< s\}\big|\le r\Big\}\\
&\ge \sup\Big\{s>0: \Big|\big\{x \in D: \Phi_1\Big(\frac 1{\rho_{\partial D}(x)}\Big)\ge \frac{{c_1}}{s}\big\}\Big|\le r\Big\}\\
&\ge \sup\Big\{s>0: \Big|\big\{x \in D:  \rho_{\partial D}(x)\le \Big[\Phi_1^{-1}(\frac{{c_1}}{s})\Big]^{-1}\big\}\Big|\le r\Big\}\\
&\ge \sup\Big\{s>0: \frac{C_6}{\log^\theta \Phi_1^{-1}(\frac{{c_1}}{s})}\le r \Big\}\\
&\ge \frac{C_7}{\Phi_1\Big(e^{C_8r^{-\frac{1}{\theta}}}\Big)},
\end{split}
\end{equation*}
where $C_7, C_8$ are positive constants depending on $\theta$. The proof is complete.
\end{proof}
\begin{remark} According to \cite[Theorem 1.1]{CSK}, \eqref{l4-2-2} is not optimal for symmetric $\alpha$-stable process on bounded $C^{1,1}$-domain.  However, as stated in Lemma \ref{l4-2}, \eqref{l4-2-2} holds for general (not necessarily symmetric) L\'{e}vy process and bounded domain with any regularity condition.
\end{remark}

\begin{proof}[Proof of Theorem $\ref{t1-2}$]Having Lemmas \ref{l4-1}, \ref{l4-2} and \ref{l4-3} at hand, one can immediately obtain Theorem \ref{t1-2}.
\end{proof}

At the end of this section, we present the

\begin{proof}[Proof of Example $\ref{exm2}$] By the definition of L\'{e}vy measure $\nu$, it is clear that assumption {\bf (A2)} holds.

By some element calculations, one can get that there is a constant $c_2\ge 1$ such that for $r>0$ small enough
$$c_2^{-1}r^{-\alpha}\le r^{-2}\int_{\{|z|\le r\}} |z|^2\,\nu(dz)\le \int\Big(1\wedge\frac{|z|}{r}\Big)^2\,\nu(dz)\le c_2r^{-\alpha}.$$
According to \cite[Proposition 1 and Lemma 5]{KSz} and the inequality above, there exists a constant $c_3\ge 1$ such that
for $r>0$ large enough,
$$c_3^{-1} r^{\alpha}\le \Phi_1(r)\le c_3 r^{\alpha},$$
for $r>0$ small enough
$$c_3^{-1} r^{-d/\alpha}\le \Phi_0(r)\le c_3 r^{-d/\alpha}.$$
Furthermore, on the one hand, by \cite[Theorem 1]{KSc2}, we know that the process $X$ has transition density $p(t,x,y)=p(t,0,y-x)$ such that for each $t>0$, $(x,y)\mapsto p(t,x,y): \R^d\times\R^d\to[0,\infty)$ is continuous. On the other hand, \cite[Lemmas 5 and 7]{KSz} and the Chapman-Kolmogorov equation for transition density $p(t,x,y)$ yield that for any $t>0$ and $x,y\in\R^d$, $p(t,x,y)>0$. Combining with all the conclusions above, we find that $p(t,x,y)$ satisfies all the assumptions in Subsection \ref{section11}. Besides, it is easy to see that all the assumptions for $p^D(t,x,y)$ also hold true, thanks to \cite[Proposition 2.2(i)]{G}.

The above estimates for $\Phi_0$ and $\Phi_1$ imply that for any $\theta>0$, there are constants $c_4,c_5>0$ such that for $r>0$ small enough,
$$\beta(r)\le c_4\exp\big(c_5(1+r^{-\frac{d}{\alpha \theta}})\big).$$
Whence, the desired assertions follow from Theorem \ref{t1-2}.
\end{proof}

\noindent{\bf Acknowledgements.} The authors are indebted to the referee for his/her careful corrections. The authors also would like to thank Professors Panki Kim,  Renming Song and Feng-Yu Wang for their helpful comments on previous versions of the paper. Financial support through \lq\lq Yang Fan Project\rq\rq \, of Science and Technology Commission of Shanghai Municipality (No.\ 15YF1405900) (for Xin Chen),  National
Natural Science Foundation of China (No.\ 11201073), the JSPS postdoctoral fellowship
(26$\cdot$04021), National Science Foundation of Fujian Province (No.\ 2015J01003), and the Program for Nonlinear Analysis and Its Applications (No.\ IRTL1206)
(for Jian Wang) is gratefully acknowledged.

\end{document}